\title{Weyl group invariants}
\author{Masaki Kameko}
\address{
Department of Mathematical Sciences,
Shibaura Institute of Technology,
307 Minuma-ku Fukasaku, Saitama-City 337-8570, Japan}
 \email{kameko@shibaura-it.ac.jp}
\thanks{The first named author is partially supported by the Japan Society for the Promotion of Science, Grant-in-
Aid for Scientific Research (C) 22540102.}
\author{Mamoru Mimura}
\address{
Department of Mathematics,
Faculty of Science,
Okayama University,
3-1-1 Tsushima-naka, Okayama, 700-8530,
Japan 
}
 \email{mimura@math.okayama-u.ac.jp}
\subjclass{55R40}
\newtheorem{theorem}{Theorem}[section]
\newtheorem{proposition}{Proposition}[section]
\newtheorem{lemma}{Lemma}[section]
\newtheorem{conjecture}{Conjecture}[section]
\theoremstyle{definition}
\newtheorem{remark}{Remark}[section]
\let\c@proposition=\c@theorem
\let\c@lemma=\c@theorem
\let\c@corollary=\c@theorem
\let\c@conjecture=\c@theorem
\let\c@definition=\c@theorem
\let\c@remark=\c@theorem
\let\c@assumption=\c@theorem
\newcommand{\spin}{\mathrm{Spin}}
\begin{document}

\begin{abstract}
For any odd prime $p$, we prove that the induced homomorphism from the mod $p$ cohomology of the  
classifying space of a 
compact simply-connected simple connected Lie group to the Weyl group invariants of  the mod $p$
 cohomology of  the classifying space of its maximal torus is an epimorphism except for the case $p=3$, $G=E_8$.
\end{abstract}

\maketitle

\section{Introduction}

Let $p$ be an odd prime.
Let $G$ be a compact connected Lie group.
Let $T$ be a maximal torus of $G$. We denote by $W$ the Weyl group $N_G(T)/T$ of $G$.
We write $H^{*}(X)$ for the mod $p$ cohomology of a space $X$. 
Then, the Weyl group $W$ acts on $G$, $T$, $G/T$, $BG$, $BT$ and their cohomologies through the inner automorphism.
The mod $p$ cohomology of $BT$ is a polynomial algebra $\mathbb{Z}/p[t_1, \dots, t_n]$. We denote by $H^{*}(BT)^W$ the ring of invariants of the Weyl group $W$. Since $G$ is path connected, the action of the Weyl group on $BG$ is homotopically trivial and so the action of the Weyl group on the mod $p$ cohomology $H^{*}(BG)$ is trivial. Therefore, we have the induced homomorphism 
\[
\eta^*: H^{*}(BG)\to H^{*}(BT)^W.
\]

If $H_*(G;\mathbb{Z})$ has no $p$-torsion, the induced homomorphism is an isomorphism.
Even if $H_*(G;\mathbb{Z})$ has  $p$-torsion, the fact that  the induced homomorphism $\eta^*$  is an
epimorphism  was proved by Toda in \cite{toda-1973}  for $(G, p)=(F_4, 3)$ and announced in \cite{toda-1975} for $(G,p) =
(E_6,3)$,  respectively. However the results depend on the computation of the Weyl group
invariants.
The purpose of this paper is not only to show the following theorem but also to give a proof without explicit computation of the Weyl group invariants.



We denote by $y_2$ a generator of $H^{2}(BG)$ for $(G, p)=(PU(p), p)$ and by $y_4$ a generator of $H^{4}(BG)$ for $(G, p)=(F_4, 3)$, $ (E_6, 3)$, $(E_7, 3)$, $(E_8, 5)$. Let $Q_i$ be the Milnor operation 
defined by $Q_0=\beta$, $Q_1=\wp^1\beta-\beta \wp^1$, $Q_2=\wp^p Q_1-Q_1\wp^p$, \dots, where $\wp^i$ is the $i$-th Steenrod reduced power operation. Let $e_2=Q_0Q_1y_2$, $e_3=Q_1Q_2y_4$ for the above $H^{*}(BG)$'s. 
For a graded vector space $M$, we denote by $M^{even}$, $M^{odd}$ for graded subspaces of $M$ spanned by even degree elements and 
odd degree elements, respectively.

\begin{theorem}\label{main}
For $(G, p)=(PU(p),p), (F_4, 3), (E_6, 3), (E_7, 3), (E_8, 5)$, 
the induced homomorphism $\eta^*$ above
is an epimorphism. Moreover, we have \[
H^{*}(BT)^W=H^{even}(BG)/(e_k),\]
where $k=2$ for $(G, p)=(PU(p),p)$ and $k=3$ for $(G, p)=(F_4, 3), (E_6, 3), (E_7, 3), (E_8, 5)$.
\end{theorem}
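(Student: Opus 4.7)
The plan has three stages.

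\emph{Step 1: The inclusion $(e_k)\subseteq\ker\eta^*$.} Since $H^*(BT)$ is a polynomial algebra on degree-$2$ generators $t_1,\dots,t_n$ and each Milnor primitive $Q_i$ is a derivation of odd degree $2p^i-1$, we have $Q_i(t_j)\in H^{\mathrm{odd}}(BT)=0$; the derivation property then forces $Q_i\equiv 0$ on all of $H^*(BT)$, so $\eta^*(e_k)=0$. The same vanishing of $H^{\mathrm{odd}}(BT)$ shows that $\eta^*$ factors through $H^{\mathrm{even}}(BG)$, so it suffices from here on to analyse $\eta^*|_{H^{\mathrm{even}}(BG)}$.

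\emph{Step 2: Surjectivity of $\eta^*$.} This is the main content. My idea is to compare $BG$ with a classifying space whose mod-$p$ cohomology is already an invariant ring. For $(G,p)=(PU(p),p)$ the relevant auxiliary is $BSU(p)$, accessed via the principal fibration $BSU(p)\to BPU(p)\to K(\mathbb{Z}/p,2)$: the polynomial generators of $H^*(BT)^W$ are hit by the pulled-back Chern classes of $BSU(p)$, while the class $y_2$ accounts for the fibration class of the base. For each of $(F_4,3),(E_6,3),(E_7,3),(E_8,5)$ I would instead pass to a rank-$n$ subgroup $H\leq G$ whose integral homology is $p$-torsion-free (for instance $\mathrm{Spin}(9)\subset F_4$ at $p=3$, and analogous classical subgroups in the remaining cases); then $H^*(BH)\cong H^*(BT)^{W_H}$ already contains $H^*(BT)^W$, and reading the Serre spectral sequence of $G/H\to BH\to BG$, with the transgression governed by the single Steenrod-algebra obstruction $e_k$, would identify the image of $H^*(BG)\to H^*(BH)$ with exactly $H^*(BT)^W$.

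\emph{Step 3: The kernel.} Granted surjectivity, we have a short exact sequence $0\to K\to H^{\mathrm{even}}(BG)\to H^*(BT)^W\to 0$ with $K\supseteq(e_k)$. A Poincar\'e-series comparison — Molien's formula on the right-hand side and the known additive structure of $H^*(BG)/(e_k)$ on the left — matches dimensions in each degree and forces $K=(e_k)$.

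\textbf{Main obstacle.} The crux is Step 2. Avoiding an explicit presentation of $H^*(BT)^W$, which is the whole point of the paper, requires showing that $e_k$ is the \emph{unique} Steenrod-algebra obstruction to lifting a $W$-invariant back to $H^*(BG)$, uniformly across the five admissible pairs. This phenomenon presumably fails for the excluded pair $(G,p)=(E_8,3)$, where a second independent obstruction appears; isolating $e_k$ as the only obstruction in the five listed cases is the technical heart of the proof.
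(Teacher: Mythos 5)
Your Step 2 is not a proof but a programme, and the missing piece you name at the end (``isolating $e_k$ as the only obstruction'') \emph{is} the theorem: surjectivity of $\eta^*$ is the entire content, and nothing in the sketch establishes it. To run the comparison $G/H\to BH\to BG$ with $H$ a maximal-rank subgroup whose homology has no $p$-torsion, you would need $H^{*}(G/H)$, control of all differentials, and an argument that the image of $H^{*}(BG)\to H^{*}(BH)=H^{*}(BT)^{W_H}$ contains all of $H^{*}(BT)^{W}$; for $(F_4,3)$ via $\mathrm{Spin}(9)$ this is exactly Toda's computation, which rests on the explicit invariant-theoretic calculations the paper is explicitly written to avoid, and for $E_6$, $E_7$, $E_8$ you have not even exhibited suitable $p$-torsion-free maximal-rank subgroups (the natural candidates, such as $SU(8)/\{\pm 1\}\subset E_7$ or $(SU(5)\times SU(5))/(\mathbb{Z}/5)\subset E_8$, are central quotients and need separate care). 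The $PU(p)$ sketch has a further defect: the isogeny $T_{SU(p)}\to T_{PU(p)}$ induces a non-injective map on $H^{2}(-;\mathbb{Z}/p)$, and $H^{*}(BT_{PU(p)})^{W}$ is a modular, non-polynomial ring, so ``the polynomial generators are hit by pulled-back Chern classes'' is not a meaningful reduction. Step 3 also fails as stated: in every listed case $p$ divides $|W|$ (e.g.\ $W=\Sigma_p$ for $PU(p)$, and $3^2\mid |W(F_4)|$), so this is modular invariant theory, Molien's formula does not compute the Hilbert series of $H^{*}(BT)^{W}$, and that series is in any case not known in advance --- it is part of what the theorem determines.

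For comparison, the paper's route is genuinely different and avoids both gaps. It runs the Serre spectral sequence of $G/T\to BT\to BG$, uses the Kac--Kitchloo theorem $H^{*}(G/T)^{W}=\mathbb{Z}/p$ together with a filtration/dimension-count lemma to reduce everything to showing $(E_\infty^{*,*'})^{W}=E_\infty^{*,0}$, and then computes the only two nontrivial differentials ($d_{m+1}$ and $d_{m(p-1)+1}$) after splitting $H^{*}(BG)=M_0\oplus M_1\oplus \ker\xi^{*}$ by restriction to the maximal non-toral elementary abelian $p$-subgroup $A$, with the short exact sequences of Proposition~\ref{image} (multiplication by $y_{m+1}$ and the operation $Q_{k-1}$) providing exact control of kernels and images. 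Surjectivity of $\eta^{*}$ and the identification $H^{*}(BT)^{W}=H^{even}(BG)/(e_k)$ then drop out simultaneously from $E_\infty^{*,0}=(M_0^{even}\oplus M_1^{even})/(e_k)\oplus\ker\xi^{*}$, with no Poincar\'e-series input. To salvage your outline you would have to replace both Step 2 and Step 3 by arguments of this substance; as written, the proposal does not prove the statement.
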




If  $G$ is a simply-connected, simple, compact connected Lie group, 
then $G$ is one of the classical groups $SU(n)$, $Sp(n)$ and $\spin(n)$
or one of the exceptional groups $G_2$, $F_4$, $E_6$, $E_7$, $E_8$.
Since $H_*(G;\mathbb{Z})$ has no $p$-torsion except for the cases
$(G, p)=(F_4, 3)$, $(E_6, 3)$, $(E_7, 3)$, $(E_8, 3)$ and $(E_8, 5)$,  the above theorem 
provides a supporting evidence  for the following conjecture. 

\begin{conjecture}
Let $p$ be an odd prime.
Let $G$ be a simply-connected, simple, compact connected Lie group.
Then, the induced homomorphism $\eta^*$ above
is an epimorphism.
\end{conjecture}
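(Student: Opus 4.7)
The plan is to factor $\eta^*$ through its even-degree part, verify that $e_k$ lies in the kernel, and then prove the induced quotient map is an isomorphism by combining the naturality of Milnor operations with a rational comparison and the known additive structure of $H^*(BG;\mathbb{Z}/p)$.

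First, since $H^*(BT;\mathbb{Z}/p) = \mathbb{Z}/p[t_1,\dots,t_n]$ is concentrated in even degrees, so is $H^*(BT)^W$, and therefore $\eta^*$ factors through $H^{even}(BG)$. Next, $e_k$ lies in $\ker\eta^*$: by naturality of the Milnor operations,
\[
\eta^*(e_k) = Q_{k-2}Q_{k-1}\,\eta^*(y_{2(k-1)}),
\]
and since $Q_{k-1}$ raises degree by $2p^{k-1}-1$, which is odd, the intermediate class $Q_{k-1}\eta^*(y_{2(k-1)})$ lies in an odd degree of $H^*(BT)$ and hence vanishes. Thus $\eta^*$ descends to a homomorphism
\[
\bar{\eta}^*: H^{even}(BG)/(e_k) \longrightarrow H^*(BT)^W.
\]

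To show $\bar{\eta}^*$ is an isomorphism, I would analyse the Serre spectral sequence of the Borel fibration $G/T \to BT \to BG$, whose edge homomorphism is $\eta^*$. Because $H^*(BT)$ is concentrated in even degrees, only transgressions on the odd-degree generators of $H^*(BG;\mathbb{Z}/p)$ can kill material in $E_\infty^{*,0}$. Using the known additive decomposition of $H^*(BG;\mathbb{Z}/p)$ in each of the five listed pairs, one identifies these odd-degree generators as Milnor-operation images of $y_{2(k-1)}$, so that their transgressions generate exactly the ideal $(e_k)$ in the even part. The rational isomorphism $H^*(BG;\mathbb{Q}) \cong H^*(BT;\mathbb{Q})^W$ combined with the Poincar\'e series of $H^*(BG;\mathbb{Z}/p)$ then forces the Poincar\'e series of $H^{even}(BG;\mathbb{Z}/p)/(e_k)$ to equal that of $H^*(BT;\mathbb{Z}/p)^W$; together with injectivity of $\bar{\eta}^*$, which follows because $(e_k)$ accounts for the entire kernel in the transgression analysis, this yields the isomorphism.

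The hardest step will be the surjectivity of $\bar{\eta}^*$. Since $p$ divides $|W|$ in all five cases, $H^*(BT;\mathbb{Z}/p)^W$ is in general not polynomial, so one cannot simply reduce to lifting polynomial generators. The novelty of the approach advertised in the paper must therefore lie in reversing the usual strategy: rather than describing $H^*(BT)^W$ first and lifting afterwards, one uses $e_k$ as a universal correction and identifies the image of $\eta^*$ directly from the spectral sequence, thereby bypassing any explicit description of the invariant ring. Making this argument uniform across the five cases is expected to be the main technical difficulty.
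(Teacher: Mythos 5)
The statement you are asked to prove is the paper's \emph{Conjecture}, not its Theorem~1.1, and that is where the essential gap lies. For a simply connected simple $G$ the only pairs with $p$-torsion are $(F_4,3)$, $(E_6,3)$, $(E_7,3)$, $(E_8,5)$ and $(E_8,3)$; the first four, together with the torsion-free cases (where $\eta^*$ is classically an isomorphism), are covered by Theorem~1.1, but the conjecture also demands $(G,p)=(E_8,3)$, which the paper explicitly leaves open. Your argument is anchored on ``the known additive structure of $H^*(BG;\mathbb{Z}/p)$'' and runs through ``the five listed pairs'' of Theorem~1.1 (one of which, $PU(p)$, is not simply connected and is irrelevant to the conjecture); for $(E_8,3)$ the mod $3$ cohomology of $BE_8$ is not known --- the relevant Rothenberg--Steenrod spectral sequence is known not to collapse at $E_2$ and its computation is an open problem --- so there is no input for your transgression analysis, and the proposal cannot establish the conjecture. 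This is precisely why the paper states it as a conjecture rather than a theorem.

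Even restricted to the cases of Theorem~1.1, two steps would not go through as written. First, the rational isomorphism $H^*(BG;\mathbb{Q})\cong H^*(BT;\mathbb{Q})^W$ does not determine the Poincar\'e series of $H^*(BT;\mathbb{Z}/p)^W$ when $p$ divides $|W|$: the mod $p$ invariant ring can be strictly larger than the reduction of the rational invariants, and computing (or even sizing) $H^*(BT)^W$ is exactly what the paper is engineered to avoid. Second, the claim that $(e_k)$ ``accounts for the entire kernel'' is asserted rather than proved. The paper's route is different: it combines the Kac--Kitchloo theorem $H^*(G/T)^W=\mathbb{Z}/p$ with the filtration inequality of Lemma~2.1 to squeeze $\dim H^*(BT)^W$ between $\dim E_\infty^{*,0}$ and $\sum_{*'}\dim (E_\infty^{*-*',*'})^W$, and then uses the invariant theory of the maximal non-toral elementary abelian $p$-subgroup (the decomposition $M_0\oplus M_1\oplus \mathrm{Ker}\,\xi^*$ and Proposition~4.3) to control the differentials and the Weyl action in the Leray--Serre spectral sequence, concluding $(E_\infty^{*,*'})^W=E_\infty^{*,0}$; surjectivity of $\eta^*$ and the identification $H^*(BT)^W=H^{even}(BG)/(e_k)$ fall out of that, with no rational or Poincar\'e-series count available to replace it.
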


To prove this conjecture, it remains to prove the case $(G, p)=(E_8, 3)$. However, the mod $3$ cohomology of $BE_8$ seems to be rather different from the other cases. For instance, the Rothenberg-Steenrod spectral sequence for the mod $p$ cohomology for $(G, p)$'s in Theorem~\ref{main} collapses at the $E_2$-level but 
the one for the mod $3$ cohomology of $BE_8$ is known not to collapse at the $E_2$-level and 
its computation is still an open problem. See \cite{kameko-mimura-2007-e8-3}.

In this paper, for $(G, p)$'s in Theorem~\ref{main}, we examine the Leray-Serre spectral sequence associated with 
the fibre bundle $BT \to BG$ and show that at the level of the spectral sequence, 
$E_r^{*,0}\not =(E_{r}^{*, *'})^{W}$ for some $r$ but we have $E_{\infty}^{*,0}=(E_{\infty}^{*, *'})^{W}$
at the end.
In the case $(G, p)=(E_8, 3)$, the cohomology of the base space $BE_8$ is not yet known;
 since we need the cohomology of the base space in order to examine the Leary-Serre spectral sequence, we
do not deal with the case $(G,p)=(E_8,3)$ in this paper.

The paper is organized as follows.
In \S2, we recall the Leray-Serre spectral sequence and the action of the Weyl group on it.
In \S3, we recall  the invariant theory  of the Weyl group of non-toral elementary abelian $p$-subgroup of $G$ in order to describe the cohomology of the base space $BG$.
In \S4, we recall the cohomology of $BG$ and prove Proposition~\ref{image}.
In \S5, using Proposition~\ref{image},  we compute  the Leray-Serre spectral sequence. As a consequence of the computation of  the Leray-Serre spectral sequence,  we obtain Theorem~\ref{main}.

The result in this paper is announced in \cite{kameko-mimura-2012}, where we deal with the case $(G, p)=(PU(p), p)$ in detail.


\section{The Weyl group and the spectral sequence}

As in \S1, let $G$ be a compact connected Lie group. We consider the
 Leray-Serre spectral sequence associated with the fibre bundle
\[
G/T \stackrel{\iota}{\longrightarrow} BT\stackrel{\eta}{\longrightarrow} BG.
\]
Since $BG$ is simply connected, the $E_2$-term is given by
\[
H^{*}(BG) \otimes H^{*'}(G/T).
\]
It converges to $gr\, H^{*}(BT)$.  Moreover, the Weyl group acts on this spectral sequence and its action is given by
\[
r^*(y\otimes x)=y \otimes r^* x, 
\]
where $r$ is an element in $W$. We fix a set of generators $\{ r_j\}$ and denote by $\sigma_j$
the induced homomorphism $1-r_j^*$. 
It is clear that \[
H^*(G/T)^W=\bigcap_{j} \; \mathrm{Ker}\, \sigma_j,
\]
and  $\sigma_j(x\otimes y)=x \otimes \sigma_j(y)$. Moreover, we have
\[
(E_r^{*,*'})^{W}=\bigcap_{j} \; \mathrm{Ker}\, \sigma_j.
\]

To relate the Weyl group invariants of $H^{*}(BT)$ with the one of $E_\infty$-term, that is $gr\, H^*(BT)$, of the spectral sequence, we use the following lemma.



\begin{lemma}\label{lemma-2-1}
Suppose that $f:M\to N$ is a filtration preserving homomorphism of finite dimensional vector spaces with filtration.
Denote by $gr f:gr M\to gr N$ the induced homomorphism between associated graded vector spaces.
Then, we have 
\[
\dim \mathrm{Ker}\; gr f \;  \geq \dim \mathrm{Ker}\; f.
\]
\end{lemma}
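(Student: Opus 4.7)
The plan is to reduce the inequality about kernels to an equivalent inequality about images, then exhibit the image of $gr f$ as a subspace of something whose dimension is obviously $\dim \operatorname{im} f$. Concretely, since $M$ and $N$ are finite dimensional and $\dim M=\dim gr M$, $\dim N=\dim gr N$, the rank-nullity theorem reduces the desired estimate to
\[
\dim \operatorname{im}(gr f)\ \leq\ \dim \operatorname{im} f.
\]

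To prove this, I would set $I=\operatorname{im} f\subseteq N$ and equip $I$ with the filtration induced from $N$, namely $F^p I = I\cap F^p N$. Then the inclusion $I\hookrightarrow N$ is filtration preserving and injective, and passing to associated graded objects yields an injection $gr I\hookrightarrow gr N$, so in particular $\dim gr I=\dim I=\dim \operatorname{im} f$. The key point is then to check that under this inclusion, $\operatorname{im}(gr f)\subseteq gr I$ as graded subspaces of $gr N$. This is immediate componentwise: an element of $gr^p N$ in the image of $gr^p f$ is represented by some $f(x)$ with $x\in F^p M$, and $f(x)\in F^p N\cap I=F^p I$, so its class in $F^p N/F^{p+1}N$ already lies in the image of $F^p I/F^{p+1}I$. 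Summing over $p$ gives $\dim \operatorname{im}(gr f)\leq \dim gr I=\dim \operatorname{im} f$, which is what we needed.

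I do not expect any serious obstacle here; the statement is essentially a reformulation of the fact that taking associated graded spaces is an exact functor on filtered finite dimensional vector spaces only up to the one-sided inequality in this lemma (the other inequality can fail because the filtration on $\operatorname{im} f$ induced from $N$ may be strictly finer than the filtration obtained by pushing forward the filtration on $M$). The only mildly delicate point is to avoid conflating these two filtrations on $\operatorname{im} f$: the argument uses the intersection filtration $I\cap F^\bullet N$ (which matches $N$'s filtration), not the pushforward filtration $f(F^\bullet M)$, and it is precisely this choice that gives the containment $\operatorname{im}(gr f)\subseteq gr I$ rather than an equality.
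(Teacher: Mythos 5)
The paper states Lemma~\ref{lemma-2-1} without proof, treating it as an elementary fact, so there is no argument of the authors' to compare with; your proof is correct. The reduction via rank--nullity (using $\dim gr\, M=\dim M$, $\dim gr\, N=\dim N$) to the inequality $\dim \mathrm{Im}\; gr f\leq \dim \mathrm{Im}\; f$ is valid, and your verification that $\mathrm{Im}\;gr f$ sits inside $gr\, I$ for $I=\mathrm{Im}\, f$ with the intersection filtration $F^pI=I\cap F^pN$ is the right point, including the remark that one must use the intersection filtration rather than the pushforward filtration $f(F^pM)$. Two small observations: first, the statement implicitly assumes the filtrations are exhaustive and bounded (as they are for the spectral-sequence filtration on $H^*(BT)$ in each total degree), which is exactly what guarantees $\dim gr\, M=\dim M$ and $\dim gr\, I=\dim I$; it is worth saying this once. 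Second, an equally short dual argument works directly on kernels and avoids rank--nullity: give $K=\mathrm{Ker}\, f$ the filtration $K\cap F^pM$; then $gr\, K$ injects into $gr\, M$ (same intersection-filtration argument you used for $I$), and every class in $gr^p K$ is killed by $gr^p f$ since it is represented by $x\in K$ with $f(x)=0$, so $\dim \mathrm{Ker}\; gr f\geq \dim gr\, K=\dim K=\dim \mathrm{Ker}\; f$. Either route is fine; yours is complete as written.
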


It is clear that 
\[
E_\infty^{*, 0} = \mathrm{Im}\; \eta^*:H^{*}(BG)\to H^{*}(BT)^W,
\]
so that $\dim E_{\infty}^{*, 0} \leq \dim H^{*}(BT)^W$.
By Lemma~\ref{lemma-2-1} above, we have
\[
\sum_{*'}  \dim (E_\infty^{*-*',*'})^{W}\geq \dim H^{*}(BT)^W.
\]
Hence, if we have 
\[
(E_\infty^{*,*'})^W=E_\infty^{*,0}, 
\]
we obtain 
\[
\dim H^{*}(BT)^W \leq \dim E_{\infty}^{*, 0} 
\]
and the desired result $E_{\infty}^{*, 0} =  H^{*}(BT)^W$.



In \cite{kac-1984}, Kac mentioned the following theorem and Kitchloo gave the detailed account of it  in \S5 of \cite{kitchloo-2008}. 

\begin{theorem}[Kac, Kitchloo]\label{Kac}
Let $p$ be an odd prime. Let $G$ be a compact connected Lie group.
Let $T$ be a maximal torus of $G$ and $W$ the Weyl group of $G$. Then, we have
$H^{*}(G/T)^W=H^0(G/T)=\mathbb{Z}/p$.
\end{theorem}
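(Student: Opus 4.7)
The plan is to reduce the claim to rank-one calculations and iterate, using Leray--Hirsch and the hypothesis that $p$ is odd. For each simple reflection $s_\alpha$ of $W$, the minimal parabolic subgroup $P_\alpha \supset T$ gives a fibration
\[
\mathbb{CP}^1 \cong P_\alpha/T \longrightarrow G/T \xrightarrow{\pi_\alpha} G/P_\alpha
\]
with a section. Leray--Hirsch then yields $H^*(G/T) \cong H^*(G/P_\alpha) \otimes H^*(\mathbb{CP}^1)$, and $s_\alpha$ acts fiberwise as the flip of $\mathbb{CP}^1$, inducing multiplication by $-1$ on $H^2(\mathbb{CP}^1) \cong \mathbb{Z}/p$. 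Since $p$ is odd, $-1 \neq 1$ in $\mathbb{Z}/p$, forcing
\[
H^*(G/T)^{s_\alpha} = \pi_\alpha^*\,H^*(G/P_\alpha).
\]

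I would next induct on the semisimple rank of $G$. For a proper subset $I \subsetneq \{\text{simple roots}\}$, let $L_I$ be the Levi factor of $P_I \supset T$ and $W_I$ its Weyl group. Since $W_I = N_{L_I}(T)/T$ acts on $G/T$ by right multiplication preserving each $\pi_I$-fiber (as $N_{L_I}(T) \subset L_I \subset P_I$), the Leray--Hirsch decomposition
\[
H^*(G/T) \cong H^*(G/P_I) \otimes H^*(L_I/T)
\]
is $W_I$-equivariant with $W_I$ acting trivially on the base factor. Applying the inductive hypothesis to $L_I$ gives $H^*(L_I/T)^{W_I} = \mathbb{Z}/p$ concentrated in degree $0$, hence $H^*(G/T)^{W_I} = \pi_I^*\,H^*(G/P_I)$. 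Since $W$ is generated by the subgroups $W_{I_j}$ for $I_j = \{\text{all simple roots except } \alpha_j\}$, we conclude
\[
H^*(G/T)^W = \bigcap_j \pi_{I_j}^*\,H^*(G/P_{I_j}).
\]
By Schubert calculus, $\pi_{I_j}^* H^*(G/P_{I_j})$ is spanned by the Schubert classes indexed by the minimal coset representatives $W^{I_j} \subset W$ for $W/W_{I_j}$; the intersection $\bigcap_j W^{I_j} = \{e\}$, because a $w \in W$ sending every simple root to a positive root must be the identity. Thus the total intersection is $\mathbb{Z}/p \cdot [X_e]$, concentrated in degree $0$.

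The main obstacle is arranging the Leray--Hirsch decomposition $W_I$-equivariantly at each stage, which requires choosing the splitting via the fiberwise Schubert basis (naturally $W_I$-equivariant) rather than an arbitrary set of extensions. The base case of the induction is semisimple rank zero ($G = T$), which is trivial; the rank-one case gives $G/T \cong \mathbb{CP}^1$ with $W = \mathbb{Z}/2$ acting by $-1$ on $H^2$, and here the odd-$p$ hypothesis is the crucial input, playing the same role at every step of the inductive descent.
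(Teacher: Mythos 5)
Your rank-one step is correct: right translation by a representative of $s_\alpha$ preserves the fibres of $\pi_\alpha\colon G/T\to G/P_\alpha$, acts on each fibre $P_\alpha/T\cong S^2$ as the fixed-point-free (antipodal) involution, hence $s_\alpha^*$ is a $\pi_\alpha^*H^{*}(G/P_\alpha)$-module map sending a Leray--Hirsch generator $x$ to $-x+\pi_\alpha^*b$, and invertibility of $2$ mod $p$ gives $H^{*}(G/T)^{s_\alpha}=\pi_\alpha^*H^{*}(G/P_\alpha)$. (Your parenthetical ``with a section'' is false in general --- already $SU(3)/T\to \mathbb{C}P^2$ admits no section --- but harmless, since Leray--Hirsch needs only surjectivity onto the fibre cohomology, which Schubert classes provide.) The genuine gap is in your inductive step: the claim that the Leray--Hirsch isomorphism $H^{*}(G/T)\cong H^{*}(G/P_I)\otimes H^{*}(L_I/T)$ can be made $W_I$-equivariant (trivial on the base factor, geometric on the fibre factor) by ``choosing the splitting via the fiberwise Schubert basis'' is not true. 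The $\mathbb{Z}/p$-span of the lifted fibre Schubert classes is not $W_I$-stable: take $G=SU(3)$, $I=\{\alpha_1\}$, and identify $H^{2}(G/T)$ with the weight lattice mod $p$; the lift of the fibre generator is $\omega_1$, and $s_1^*\omega_1=\omega_1-\alpha_1=-\omega_1+\omega_2$, where $\omega_2$ is pulled back from $G/P_I$. So $s_1^*$ mixes the fibre lift with base classes, the proposed splitting is not equivariant, and since $p$ may divide $|W_I|$ you cannot average to produce an equivariant one. As written, the key identity $H^{*}(G/T)^{W_I}=\pi_I^*H^{*}(G/P_I)$ (really the inclusion $\subseteq$, which is the direction you use) is therefore unjustified.

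Fortunately the parabolic induction is unnecessary: your first and last paragraphs already prove the theorem if you intersect over the minimal parabolics. Since $W$ is generated by the simple reflections, $H^{*}(G/T)^W\subseteq\bigcap_\alpha H^{*}(G/T)^{s_\alpha}=\bigcap_\alpha\pi_\alpha^*H^{*}(G/P_\alpha)$; each $\pi_\alpha^*H^{*}(G/P_\alpha)$ is the span of the Schubert classes indexed by $\{w\in W\mid \ell(ws_\alpha)>\ell(w)\}=\{w\mid w(\alpha)>0\}$, and since these are spans of subsets of one common basis, the intersection is spanned by the classes with $w(\alpha)>0$ for every simple $\alpha$, i.e.\ by the degree-zero class alone. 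Alternatively, your induction can be salvaged without any equivariant splitting: filter $H^{*}(G/T)$ by base degree; $W_I$ preserves the filtration, its action on the associated graded is trivial on $H^{*}(G/P_I)$ tensored with the geometric action on $H^{*}(L_I/T)$ (because restriction to the fibre is equivariant), and a dimension count in the spirit of Lemma~\ref{lemma-2-1} then yields $H^{*}(G/T)^{W_I}=\pi_I^*H^{*}(G/P_I)$. Note finally that the paper itself gives no proof of this theorem --- it quotes Kac and Kitchloo --- so there is no internal argument to compare with; your (repaired) reduction to rank one plus Schubert calculus is essentially the standard route.
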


Theorem~\ref{Kac} is the starting point of this paper. By Theorem~\ref{Kac}, we have 
\[
(E_2^{*,*'})^{W}=(H^{*}(BG)\otimes H^{*'}(G/T))^W=(H^{*}(BG)\otimes \mathbb{Z}/p)=E_2^{*,0}.
\]
Recall that  the cohomology $H^{*}(G/T)$  has no odd degree generators. So,
if $H_*(G;\mathbb{Z})$ has no $p$-torsion,   then the  $E_2$-term has no odd degree generators. Hence, it collapses at the $E_2$-level.
Thus, we have that
\[
(E_\infty^{*,*'})^W=E_{\infty}^{*,0}=H^{*}(BG).\]
Therefore, it is clear that the induced homomorphism $\eta^*:H^{*}(BG)\to H^{*}(BT)^W$is an isomorphism if $H_*(G;\mathbb{Z})$ has no $p$-torsion.

However, for $(G, p)$ in Theorem~\ref{main}, $H_*(G;\mathbb{Z})$ has $p$-torsion and we have odd degree generators in the $E_2$-level. 
These odd degree generators do not survive to the $E_{\infty}$-level.
So, the spectral sequence does not collapse at the $E_2$-level. We deal with the spectral sequence for $(G,p)$ in Theorem~\ref{main} in \S4 and we will see that $(E_r^{*, *'})^W \not = E_r^{*, 0}$ for some $r$ but still $(E_\infty^{*, *'})^W = E_\infty^{*, 0}$ holds.

We end this section by recalling from \cite{kac-1984} the description of the mod $p$ cohomology of $G/T$ for $(G, p)$'s in Theorem~\ref{main}.



\begin{theorem}[Kac]\label{flag}
For $(G,p)$ in Theorem~{\rm\ref{main}}, as an $S$-module, 
$H^{*}(G/T)$ is a free $S$-module generated by $x_m^i$ $(0\leq i \leq p-1)$, that is, 
\begin{align*}
H^{*}(G/T) &= S\{ x_m^{i} \;|\; 0\leq i \leq p-1 \},
\end{align*}
where $S$ is the image of the induced homomorphism $\iota^*:H^{*}(BT) \to H^{*}(G/T)$, $m=2$ for $(G, p)=(PU(p), p)$ and $m=2p+2$ for $(G,p)=(F_4, 3)$, $( E_6, 3)$, $(E_7, 3)$ and $(E_8, 5)$.
\end{theorem}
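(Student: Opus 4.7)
The plan is to follow Kac's strategy from \cite{kac-1984} via a Poincaré series comparison. The key identity to establish is
\[
P(H^*(G/T); t) \;=\; P(S; t) \cdot (1 + t^m + t^{2m} + \cdots + t^{(p-1)m}),
\]
after which freeness of $H^*(G/T)$ over $S$ on the basis $\{x_m^i : 0 \le i \le p-1\}$ reduces to a rank comparison.

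First I would pin down $P(H^*(G/T); t)$. Because $G/T$ admits a Bruhat decomposition into even-dimensional Schubert cells indexed by $W$, the integral cohomology $H^*(G/T;\mathbb{Z})$ is torsion-free, and the mod-$p$ Poincaré polynomial agrees with the rational one, namely $\prod_{i=1}^{n} [d_i]_{t^2}$, where $d_1,\ldots,d_n$ are the fundamental degrees of $W$ and $[k]_u := 1 + u + \cdots + u^{k-1}$. Next I would compute $P(S; t)$: functoriality gives $\iota^* \eta^* = 0$, so $\ker \iota^* \supseteq \eta^*(H^{>0}(BG))$, and using the known structure of $H^*(BG)$ recalled in \S4 together with the Leray-Serre spectral sequence I would identify $\ker \iota^*$ fully and verify the displayed identity by direct computation.

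To exhibit the extra module generator, for $(PU(p), p)$ one has $G/T = U(p)/T^p$ with $H^*(G/T) = \mathbb{Z}/p[t_1, \ldots, t_p]/(e_1, \ldots, e_p)$, while $\iota^*|_{\deg 2}$ consists precisely of classes $\sum a_i t_i$ with $\sum a_i \equiv 0 \pmod p$; the element $x_2 := t_1$ therefore lies in $H^2(G/T) \setminus S$. For the exceptional cases, $x_{2p+2}$ is constructed analogously from a specific degree-$(2p+2)$ element detecting the primitive $p$-torsion of $H^*(BG;\mathbb{Z})$, which arises by transgression from the degree-$(2p+1)$ primitive torsion generator of $H^*(G;\mathbb{Z})$.

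Finally, checking $x_m^p \in S$ shows that $\bigoplus_{i=0}^{p-1} S\cdot x_m^i$ is a well-defined $S$-submodule of $H^*(G/T)$; the Poincaré series identity then forces this submodule to exhaust $H^*(G/T)$ and to be free. The main obstacle is the construction of $x_{2p+2}$ together with the verification $x_{2p+2}^p \in S$ in the four exceptional cases; these rely on case-by-case analysis using the Milnor-operation structure of $H^*(BG;\mathbb{Z}/p)$ detailed in \S4.
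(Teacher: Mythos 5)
The paper does not prove this statement at all: Theorem~\ref{flag} is recalled verbatim from Kac \cite{kac-1984} (with Toda's computations for the exceptional cases in the background), so there is no internal proof to compare with; your attempt has to stand on its own. As it stands it is a plan rather than a proof, and the plan has concrete gaps. The central quantity, $P(S;t)$, is never computed: you reduce it to ``identify $\ker\iota^*$ fully \dots\ by direct computation'' using the Leray--Serre spectral sequence of $G/T\to BT\to BG$, but that spectral sequence is exactly what the paper computes in \S5, and it does so by \emph{using} Theorem~\ref{flag} as input (the $E_2$-term is $H^*(BG)\otimes H^*(G/T)$, so one needs $H^*(G/T)$ and its $S$-module structure before any differentials can be analyzed). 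Without an independent determination of $H^*(G/T)$ and of the differentials, this step is circular, or at best an appeal to the very computation you are trying to establish. Likewise the construction of $x_{2p+2}$ and the verification $x_{2p+2}^p\in S$ in the four exceptional cases are only asserted to ``rely on case-by-case analysis using \S4''; but \S4 of the paper concerns $H^*(BA)^{W(A)}$ and the image of $\xi^*$, and contains no such computation, so these key steps are simply deferred.

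There is also a logical gap at the end. Even granting the Poincar\'e series identity
\[
P(H^*(G/T);t)=P(S;t)\bigl(1+t^m+\cdots+t^{(p-1)m}\bigr),
\]
together with $x_m\notin S$ and $x_m^p\in S$, it does not follow that $\bigoplus_{i=0}^{p-1}S\,x_m^i$ ``exhausts $H^*(G/T)$ and is free.'' For the dimension count to close you must first establish one of the two halves independently: either surjectivity of the $S$-module map $\bigoplus_i S\,x_m^i\to H^*(G/T)$ (e.g.\ by showing $H^*(G/T)$ is generated as an algebra by $S$ and $x_m$, equivalently that the fiber of $\iota^*$ in the sense of $H^*(G/T)\otimes_S\mathbb{Z}/p$ is spanned by the powers of $x_m$), or linear independence of $1,x_m,\dots,x_m^{p-1}$ over $S$; only then does equality of Hilbert series force the other half. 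Your argument proves neither. The $PU(p)$ case is fine in spirit (there $H^*(G/T)=\mathbb{Z}/p[t_1,\dots,t_p]/(e_1,\dots,e_p)$, $S$ is generated by the classes $\sum a_it_i$ with $\sum a_i\equiv 0$, and $t_1$ does generate the cokernel), but for $(F_4,3)$, $(E_6,3)$, $(E_7,3)$, $(E_8,5)$ the generation statement is precisely the hard content of Kac's and Toda's results and cannot be waved through. To make this route honest you would either have to carry out the fiber-bundle computation with independent input (for instance the known $H^*(G;\mathbb{Z}/p)$ and the fibration $G\to G/T\to BT$, or an Eilenberg--Moore argument), or simply cite \cite{kac-1984} as the paper does.
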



\section{Invariant theory}

In order to describe the odd degree generators of $H^{*}(BG)$ for $(G, p)$ in Theorem~\ref{main}, 
we consider non-toral elementary abelian $p$-subgroups of $G$.

Non-toral elementary abelian $p$-subgroups  of a compact connected Lie group $G$ and their Weyl groups  are described in \cite{agmv} not only for $(G, p)$ in Theorem~\ref{main} but also for $(G, p)=(E_8, 3), (PU(p^n), p)$. 
For $(G, p)$'s in Theorem~\ref{main}, 
there exists a unique, up to conjugacy,  maximal non-toral elementary abelian $p$-subgroup $A$. 
Their Weyl groups
$W(A)=N_G(A)/C_G(A)$ are also determined in  \cite{agmv}. We refer the reader to  \cite{agmv} for the details.
From now on, we consider the case $(G, p)$ in Theorem~\ref{main} only.

We denote by $\xi:A \to G$ the inclusion of $A$ into $G$ and  the induced map $BA\to BG$ by the same symbol $\xi:BA \to BG$.
Although the Weyl group invariants $H^*(BT)^{W}$ is not yet known, we determined in \cite{kameko-mimura-2007} 
the ring of invariants $H^{*}(BA)^{W(A)}$. It  is rather easy to describe it in terms of Dickson-Mui invariants because the Weyl groups $W(A)$ are $SL_{2}(\mathbb{Z}/p)$ for $(G, p)=(PU(p), p)$, 
$SL_3(\mathbb{Z}/p)$ for $(G, p)=(F_4, 3)$, $(E_8, 5)$ and 
\[ \left\{ \left( \begin{array}{c|ccc}
1 & * &* & * \\
\hline
0 &   &  &  \\
0 & & g & \\
0 & & & 
\end{array}
\right) \right\}, \quad  \left\{ \left( \begin{array}{c|ccc}
\varepsilon & * &* & * \\
\hline
0 &   &  &  \\
0 & & g & \\
0 & & & 
\end{array}
\right) \right\}
\]
for $(G, p)=(E_6, 3)$, $(E_7, 3)$, respectively, where $g$ ranges over  $SL_3(\mathbb{Z}/3)$ and $\varepsilon$ ranges over $(\mathbb{Z}/3)^{\times}$.



In order to  describe  the image of $\xi^*:H^{*}(BG)\to H^{*}(BA)^{W(A)}$ for the above $(G, p)$, 
firstly, we recall from \cite{kameko-mimura-2007} the invariant theory of special linear groups and related groups.
Let $p$ be an odd prime and 
 $A_n$  the elementary abelian $p$-group of rank $n$.  
 We need the cases $n=2, 3, 4$, $p=3, 5$ only. 
 However, since some arguments seem to be comprehensive 
 when we put them in the general  setting, 
 we recall the invariant theory without any restriction on $p$ and $n$ for the time being. 
 We have
\[
H^{*}(BA_n)=\mathbb{Z}/p[t_1, \dots, t_n] \otimes \Lambda(dt_1, \dots, dt_n), 
\]
where $dt_i$'s are generators of  $H^1(BA_n)$, $t_i=\beta dt_i$, and $\beta$ is the Bockstein homomorphism.
Denote by
$G_n$, $G_n'$   subgroups of $GL_n(\mathbb{Z}/p)$ consisting of the following matrices:
\[
\left( \begin{array}{c|ccc} 1 & * & \cdots  & * \\ \hline
0 & & &  \\[-1ex]
\vdots & & g &  \\
0 & & &  
\end{array}
\right), \quad \left( \begin{array}{c|ccc} \alpha & * & \cdots & * \\ \hline
0 & & &  \\[-1ex]
\vdots & & g &  \\
0 & & & 
\end{array}
\right), 
\]
respectively, where $g$ ranges over $SL_{n-1}(\mathbb{Z}/p)$ and  $\alpha$ ranges over $ (\mathbb{Z}/p)^{\times}$.
We use the letter $H$ to denote one of  $SL_n(\mathbb{Z}/p), G_n, G_n'$.

Now, we define some elements in the ring of  invariants.
Let $V_n$ 
be the vector space over $\mathbb{Z}/p$ spanned by 
$t_1, \dots, t_n$ and $V_{n-1}$ the subspace spanned by $t_2, \dots, t_n$.
We denote the element $dt_1\cdots dt_n$ by $u_n$ and let $e_n=Q_0\cdots Q_{n-1} u_n$.
We also denote $dt_2\cdots dt_{n}$ by $u_{n-1}$ and 
let $e_{n-1}=Q_0\cdots Q_{n-2} u_{n-1}$.
The Dickson invariants $c_{n, i}$'s are defined by
\[
\prod_{x\in V_n}  (X-x)=\sum_{j=0}^{n} (-1)^{j}c_{n, n- j} X^{p^{n-j}}.
\]
 We also consider the Dickson invariants  $c_{n-1, i}$ given by
\[
\prod_{x\in V_{n-1}}  (X-x)=\sum_{j=0}^{n-1} (-1)^{j}c_{n-1, n- 1-j} X^{p^{n-1-j}}.
\]
We define $\mathcal{O}_{n-1}$ to be \[
Q_{n-1}-c_{n-1, n-2} Q_{n-2}+\cdots +(-1)^{n-1}c_{n-1, 0} Q_0.
\]
Let $f_n=\mathcal{O}_{n-1}(dt_1)$.
Then, it is clear that we have
\[
u_{n-1}=f_n^{-1}  \mathcal{O}_{n-1} u_n
\]
and 
\[
e_{n-1}=f_{n}^{-1} Q_0\cdots Q_{n-2}\mathcal{O}_{n-1} u_n.
\]
For the sake of notational simplicity, we denote $\{ 0, \dots, n-1\}$, $\{ 0, \dots, n-2\}$ by $\Delta_n$, $\Delta_{n-1}$, respectively.
For a  subset $I=\{ i_1, \dots, i_r\}$, where
$0\leq i_1<\cdots <i_r\leq n-1$,  of $\Delta_n$, 
let
$$Q_I u_n=Q_{i_1}\cdots Q_{i_r} u_n$$ and $Q_{\phi}u_n=u_n$. 
In order to deal with $H=SL_n(\mathbb{Z}/p), G_n, G'_n$ in the same manner, we need  the following definition.
We define $\overline{Q}_i$ by $\overline{Q}_i=Q_i$ for $i=0, \dots, n-2$.
Let $\overline{u}_n=u_n$ for $H=SL_n(\mathbb{Z}/p), G_n$ and 
$\overline{u}_n=f_n^{p-2} u_n$ for $H=G'_n$.
We define $\overline{Q}_{n-1}$ by
 $$\overline{Q}_{n-1}=Q_{n-1}, \;\; f_n^{-1}{\mathcal{O}}_{n-1}, \;\;  f_n^{-p+1}\mathcal{O}_{n-1}$$ for $H=SL_n(\mathbb{Z}/p), G_n, G_n'$, respectively.
We define $\overline{Q}_I \overline{u}_n$ to be
$$\overline{Q}_{i_1}\cdots \overline{Q}_{i_{r}} \overline{u}_n$$
and $\overline{Q}_{\emptyset}\overline{u}_n=\overline{u}_n$.

We denote the ring of invariants 
\[
\mathbb{Z}/p[t_1, \dots, t_n]^{H}
\]
by $R$.
We have 
\[
\begin{array}{rcll} 
R&=&\mathbb{Z}/p[c_{n,1}, \dots, c_{n,n-1}, e_n] & \quad \mbox{for $H=SL_n(\mathbb{Z}/p)$,}\\
R&=&\mathbb{Z}/p[c_{n-1, 1}, \dots, c_{n-1,n-2}, e_{n-1}, f_n] &\quad   \mbox{for $H=G_n$,}\\
R&=&\mathbb{Z}/p[c_{n-1, 1}, \dots, c_{n-1,n-2}, e_{n-1}, f_n^{p-1}] & \quad  \mbox{for $H=G'_n$.}
\end{array}
\]
Moreover, we have 
\[
\begin{array}{rcll}
H^{*}(BA_n)^{H}&=&
R \{ 1, 
Q_I u_n  \}& \quad \mbox{for $H=SL_n(\mathbb{Z}/p)$,} \\
H^{*}(BA_n)^{H}&=&R\{ 1, Q_J u_{n-1}, Q_K u_n \}&\quad \mbox{for $H=G_n$,} \\
H^{*}(BA_n)^{H}&=&R\{ 1, Q_J u_{n-1}, f_n^{p-2}Q_K u_n \}&\quad \mbox{for $H=G_n'$,} 
\end{array}
\]
where  $I$ ranges over all the proper subsets of $\Delta_{n}$, $J$ ranges over all the proper subsets of $\Delta_{n-1}$
and $K$ ranges over all the subsets of $\Delta_{n-1}$.
With the above definitions of $\overline{Q}_i$'s and $\overline{u}_n$, 
 we may write
\[
H^{*}(BA_n)^{H}=R \{ 1, \overline{Q}_I \overline{u}_n\}, 
\]
where $I$ ranges over all the proper subsets of $\Delta_{n}$.



Secondly, we consider a subspace $F_i$ of $H^{*}(BA_n)^{H}$.
Since there holds 
 \[
  H^{*}(BA_n)^{G_n} \subset H^{*}(BA_n)^{G_n},
 \]
we introduce a  filtration on $H^{*}(BA_n)^{SL_n(\mathbb{Z}/p)}$ and $H^{*}(BA_n)^{G_n}$.
 Let $w(\overline{Q}_I\overline{u}_n)$ be the number of elements in $I$.
 For monomials in $H^{*}(BA_n)^{SL_n(\mathbb{Z}/p)}$, 
 we define $w(-)$ by
 \begin{align*}
 w(e_n)&=n, \\
 w(c_{n, j})&=0 \quad \mbox{($j=1, \dots, n-1$)}, 
 \end{align*}
 and $$w(xy)=w(x)+w(y),$$
 where $x, y$  are  monomials in $H^{*}(BA_n)^{SL_n(\mathbb{Z}/p)}$.
For monomials  in $H^{*}(BA_n)^{G_n}$, we define $w(-)$ by
\begin{align*}
w(e_{n-1})&=n, \\
w(c_{n-1, j})&=0 \quad \mbox{($j=1, \dots, n-2$)}, \\
w(f_n)&=0,
\end{align*}
and $$w(xy)=w(x)+w(y),$$
where $x, y$  are  monomials in $H^{*}(BA_n)^{G_n}$.
We  denote by $F_{j+i, i}$ the subspace spanned by elements $x$ such that
$j+i \geq w(x)\geq i$, that is, $F_{j+i, i}=F_i\oplus F_{i+1} \oplus \cdots \oplus F_{i+j}$.
 Since  
 \[
 H^{*}(BA_n)^{G_n'} \subset  H^{*}(BA_n)^{G_n},
 \]
 by  abuse of notation, we denote 
 $$H^{*}(BA_n)^{G_n'} \cap F_{i}$$ by $F_{i}$.
It is clear that 
\[
F_{\infty, i}= \mathbb{Z}/p[e_{k}] \otimes F_{n-1+i, i}, 
\]
where $k=n$ for $H=SL_n(\mathbb{Z}/p)$ and $k=n-1$ for $H=G_n, G'_n$.
It is also clear that
\[
H^{*}(BA_n)^H=R \oplus F_{\infty,  0}.
\]

We will see in Theorem~\ref{detect} that 
$R\oplus F_{\infty, 0}$ is the image of $\xi^*$ for $(G, p)=(PU(p), p)$, 
that $R\oplus F_{\infty, 1}$ is the image of $\xi^*$ for $(G, p)=(F_4, 3)$ and $(E_8, 5)$, 
that $R\oplus F_{\infty,2}$ is the image of $\xi^*$ for $(G, p)=(E_6, 3)$ and that
$R\oplus (F_{\infty, 2}  \cap (\overline{\mathcal{O}}_3 u_4) )$ is the image of $\xi\*$ for $(G, p)=(E_7, 3)$ where $ (\overline{\mathcal{O}}_3 u_4) $ is the $R$-submodule generated by 
$Q_{I}u_3=Q_I\overline{\mathcal{O}}_3 u_4$'s.



Next, we consider the multiplication on $R \oplus F_{\infty, i}$.
For $I=\{i_1, \dots, i_r\}, J=\{j_1, \dots, j_s\} \subset \Delta_n$
such that  $I \cap J=\emptyset$, we denote the sign of the permutation
\[ 
\left( \begin{array}{cccccc}
1 & \cdots & r & r+1 & \cdots & r+s\\ i_1 & \cdots & i_r & j_1 & \cdots & j_s \end{array}\right)
\]
simply by $\mathrm{sgn}(I, J)$. If one of $I$, $J$ is empty, we set $\mathrm{sgn}(I, J)=1$.

\begin{lemma}\label{lemma-3-1}
If $I \cup J \not =\Delta_{n}$, then $Q_Iu_n\cdot Q_Ju_n=0$.
If $I\cup J=\Delta_{n}$, then 
\[
Q_I u_n \cdot Q_Ju_n=(-1)^{nr+r^2}\mathrm{sgn}(K, I  \setminus K)  \mathrm{sgn}(I\setminus K, J) e_n Q_K u_n,
\]
where $K=I \cap J$ and $r$ is the number of elements in $I\setminus K$.
\end{lemma}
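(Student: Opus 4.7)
\emph{Plan.} The proof splits into two parts: a reduction to the disjoint case $I \cap J = \emptyset$, and a direct Laplace-expansion argument in that case.

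\emph{Reduction.} Set $K = I \cap J$. Because each $Q_i$ is an anti-derivation with $Q_i^2 = 0$ and the $Q_i$'s pairwise anti-commute, one checks that $Q_{K'}(Q_J u_n) = 0$ for every nonempty $K' \subset K$: any $Q_i$ with $i \in J$ can be anti-commuted through $Q_J$ until it meets its duplicate. Iterating the Leibniz rule for the anti-derivations $Q_{k_1}, \dots, Q_{k_\kappa}$ of $K$ applied to the product $Q_{I\setminus K} u_n \cdot Q_J u_n$, only the term in which every $Q_{k_j}$ lands on the left factor survives, giving
\[
Q_K\!\bigl(Q_{I\setminus K} u_n \cdot Q_J u_n\bigr) = (Q_K Q_{I\setminus K} u_n) \cdot Q_J u_n = \mathrm{sgn}(K, I \setminus K) \, Q_I u_n \cdot Q_J u_n.
\]
Since $Q_i$ annihilates every polynomial in $t_1, \dots, t_n$, the same Leibniz expansion yields $Q_K(e_n u_n) = e_n Q_K u_n$. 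Hence once the lemma is established for $(I \setminus K, J)$, applying $Q_K$ and dividing by $\mathrm{sgn}(K, I \setminus K)$ produces the general case; and vanishing in the disjoint case forces vanishing in the general case.

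\emph{Disjoint case.} We are reduced to showing, for disjoint $I', J \subset \Delta_n$ with $r = |I'|$,
\[
Q_{I'} u_n \cdot Q_J u_n = \begin{cases} (-1)^{nr + r^2} \mathrm{sgn}(I', J) \, e_n u_n & \text{if } I' \sqcup J = \Delta_n, \\ 0 & \text{otherwise.} \end{cases}
\]
I would prove this by expanding
$Q_{I'} u_n = \sum_{\phi: I' \hookrightarrow [n]} \epsilon(\phi) \prod_{i \in I'} t_{\phi(i)}^{p^i} \cdot \prod_{k \notin \phi(I')} dt_k$
and similarly for $Q_J u_n$. Multiplying term by term, the exterior part of each pair $(\phi, \psi)$ vanishes unless $[n]\setminus \phi(I')$ and $[n]\setminus \psi(J)$ are disjoint, i.e.\ $\phi(I') \cup \psi(J) = [n]$, which in the disjoint setting forces $|I'|+|J|=n$ and hence $I' \sqcup J = \Delta_n$. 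In that case each $(\phi, \psi)$ assembles into a bijection $\tilde{\phi}: \Delta_n \to [n]$, the $dt$-parts collapse to $\pm u_n$, and the alternating sum of $\prod_{i\in\Delta_n} t_{\tilde\phi(i)}^{p^i}$ becomes the determinant $\det(t_j^{p^i})_{i \in \Delta_n,\, j\in [n]} = \pm e_n$.

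\emph{Main obstacle.} The delicate step is the sign bookkeeping. The factor $(-1)^{nr + r^2}$ and the sign $\mathrm{sgn}(I\setminus K, J)$ must be extracted from (i) the signs $\epsilon(\phi), \epsilon(\psi)$ produced when applying anti-derivations one at a time, (ii) the shuffle sign when multiplying two ordered products of $dt$'s, and (iii) the determinantal sign identifying the alternating polynomial sum with $e_n$. These combine into one explicit formula but are easy to mishandle; I would lock in the convention on the base case $n=2$, $I'=\{0\}$, $J=\{1\}$, where direct computation gives $(Q_0 u_2)(Q_1 u_2) = -e_2 u_2$, agreeing with the predicted $(-1)^{2\cdot 1 + 1}\mathrm{sgn}(\{0\},\{1\})\, e_2 u_2 = -e_2 u_2$.
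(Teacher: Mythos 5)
Your reduction to the disjoint case is correct and is in fact exactly the paper's argument: apply $Q_K$ to $Q_{I\setminus K}u_n\cdot Q_Ju_n$, note that every Leibniz term in which some $Q_k$, $k\in K\subset J$, hits the right factor dies because $Q_kQ_Ju_n=0$, use $Q_KQ_{I\setminus K}u_n=\mathrm{sgn}(K,I\setminus K)Q_Iu_n$ and $Q_K(e_nu_n)=e_nQ_Ku_n$; the same device also converts disjoint-case vanishing into the general vanishing statement. Where you diverge from the paper is the disjoint case itself: you expand both factors over injections and identify the surviving sum with a Moore-type determinant equal to $\pm e_n$, whereas the paper argues by induction on $r$, using the exterior-degree (weight) vanishing $Q_{I\setminus\{i_1\}}u_n\cdot Q_Ju_n=0$ and applying $Q_{i_1}$ to slide one operation at a time onto the right factor, each step contributing a sign $(-1)^{n+r-j}$, so that the product telescopes to $\pm\,u_n\cdot Q_IQ_Ju_n=\pm\,\mathrm{sgn}(I,J)\,e_nu_n$ with the sign emerging automatically.

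The gap is precisely at the point you flag as the ``main obstacle'': the sign $(-1)^{nr+r^2}\mathrm{sgn}(I\setminus K,J)$ is the entire nontrivial content of the disjoint case beyond vanishing, and your proposal never extracts it from the three sources (i)--(iii) you list. Checking the single instance $n=2$, $I'=\{0\}$, $J=\{1\}$ (which you do correctly: $(Q_0u_2)(Q_1u_2)=(t_1t_2^p-t_1^pt_2)u_2=-e_2u_2$) cannot ``lock in'' an exponent that depends on $n$ and $r$: for example $(-1)^r$, $(-1)^{r|J|}$ and $(-1)^{nr+r^2}$ all agree at $(n,r)=(2,1)$, and on the complementary stratum $r+|J|=n$ these are genuinely different conventions to distinguish in general. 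So as written the proof establishes the qualitative statement (vanishing, and proportionality to $e_nQ_Ku_n$) but not the asserted coefficient. To close it you should either carry out the full bookkeeping in your expansion — the shuffle sign from $\prod_{k\in\psi(J)}dt_k\cdot\prod_{k\in\phi(I')}dt_k=\pm u_n$, the signs $\epsilon(\phi),\epsilon(\psi)$, and the determinantal normalization of $e_n$, for arbitrary $n,r$ — or replace the expansion by the inductive sliding argument, where the signs accumulate as $(-1)^{n+r}(-1)^{n+r-1}\cdots(-1)^{n+1}=(-1)^{nr+r(r+1)/2}$ and reordering $Q_{i_r}\cdots Q_{i_1}Q_J$ into $Q_IQ_J$ contributes $(-1)^{r(r-1)/2}\mathrm{sgn}(I,J)$, giving the stated $(-1)^{nr+r^2}\mathrm{sgn}(I,J)$ without term-by-term analysis.
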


\begin{proof}
If $I \cup J \not = \Delta_n$, then 
$w(Q_{I\setminus K} u_n)+w(Q_Ju_n)<n$. So, it is clear that 
\[
Q_{I\setminus K} u_n \cdot Q_J u_n=0.
\]
On the other hand, we have
\[
Q_K (Q_{I\setminus K} u_n \cdot Q_J u_n)=\mathrm{sgn}(K, I \setminus K) Q_I u_n \cdot Q_Ju_n, 
\]
since $Q_k Q_J=0$ for $k \in K \subset J$. Hence, we have $Q_I u_n \cdot Q_Ju_n=0$ as desired.

Next, we deal with the case $I \cup J=\Delta_{n}$ and $I\cap J=\emptyset$. 
There holds
\[
Q_{I\setminus \{i_1\}} u_n \cdot Q_Ju_n=0, 
\]
since $w(Q_{I\setminus \{i_1\}} u_n)+w(Q_Ju_n)<n$.
On the other hand, we have
\begin{align*}
Q_{i_1} (Q_{I\setminus \{i_1\}} u_n \cdot Q_Ju_n)&=Q_I u_n \cdot Q_J u_n +(-1)^{n+r-1} Q_{I\setminus \{i_1\}} u_n \cdot Q_{i_1} Q_{J} u_n.
\end{align*}
Hence, we have 
\begin{align*}
Q_I u_n \cdot Q_J u_n &=(-1)^{n+r} Q_{I\setminus \{i_1\}} u_n \cdot Q_{i_1} Q_{J} u_n.
\end{align*}
Therefore, we obtain 
\begin{align*}
Q_I u_n \cdot Q_Ju_n&=(-1)^{n+r}(-1)^{n+r-1}\cdots (-1)^{n+1}   u_n \cdot Q_{i_r} \cdots Q_{i_2} Q_{i_1} Q_J u_n
\\
&=(-1)^{nr+r(r+1)/2} (-1)^{r(r-1)/2} u_n \cdot Q_I Q_J u_n \\
&=(-1)^{rn+r^2}\mathrm{sgn}(I, J) u_n \cdot e_n,
\end{align*}
which is the desired result.

Now, we deal with the  case $I \cup J=\Delta_n$ and $K=I \cap J$. 
Then, we have 
\[
Q_{I\setminus K} u_n \cdot Q_J u_n=(-1)^{rn+r^2}\mathrm{sgn}(I\setminus K, J) e_n u_n.
\]
Hence, we have 
\[
Q_K(Q_{I\setminus K} u_n \cdot Q_J u_n)=(-1)^{rn+r^2} \mathrm{sgn}(I\setminus K, J) e_n Q_K u_n.
\]
On the other hand, we have
\[
Q_K (Q_{I\setminus K} u_n \cdot Q_J u_n)=\mathrm{sgn}(K, I\setminus K) Q_I u_n  \cdot Q_Ju_n,
\]
since $Q_k Q_J u_n=0$ for $k \in K \subset J$.
This completes the proof.
\end{proof}

\begin{remark} \label{remark-3-2} For $H=G_n, G_n'$, 
it is easy to see that
we have the same formula
\begin{align*}
\overline{Q}_I   \overline{u}_n \cdot \overline{Q}_J  \overline{u}_n&
=(-1)^{nr +r^2} 
\mathrm{sgn}(K , I \setminus K)
\mathrm{sgn}(I\setminus K, J ) e_{n-1} \overline{Q}_K \overline{u}_n.
\end{align*}
\end{remark}

By Lemma~\ref{lemma-3-1} and by Remark~\ref{remark-3-2}, we see that $F_i \cdot F_j \subset F_{i+j}$.
Therefore, $R\oplus F_{\infty,  i}$ is closed under the multiplication.

Finally, we end this section by considering the direct sum decomposition $$R \oplus F_{\infty, i}=N_0\oplus N_1.$$
We use this decomposition in order  to deal with  differentials in the spectral sequence in \S5.
For $0\leq \ell \leq n-2$, 
let $E{(\ell)}$ be the subspace of $F_{\infty,  0}$ spanned by $\{ x\overline{ Q}_I \overline{u}_n \; |\; \ell \in I, x\in R \}$ and
$\widehat{E}{(\ell)}$ the subspace of $F_{\infty,  0}$ spanned by $\{ x\overline{ Q}_I \overline{u}_n \; |\; \ell \not \in I, x \in R\}$.
Let
\begin{align*}
N_0&=R\oplus (F_{\infty,  i} \cap E(\ell))\quad \quad\mbox{and} \\
N_1 &=F_{\infty,  i} \cap \widehat{E}(\ell).
\end{align*}
Let 
\[
z_\ell  =Q_0\cdots \widehat{Q}_\ell \cdots Q_{n-2} \overline{Q}_{n-1} \overline{u}_n.
\]
From now on, we assume that $i\leq n-1$, so that $z_\ell \in N_1$.
Let $\overline{F}_{j,i}={F}_{j,i}/(z_{\ell})$ if $z_\ell \in F_{\infty,  i}$.
Then, 
it is easy to see that the following proposition holds.



\begin{proposition}  \label{prop-3-3}
There hold the following\,{\rm :}
\begin{itemize}
\item[(1)] Suppose that  $i<n$. 
If  $n-i$ is even, then $F_i^{even}=F_i$. If not, $F_i^{even}=\{0\}$.
\item[(2)] If $n$ is even, then $F_n^{even}=F_n$. If not, $F_n^{even}=\{0\}$.
\item[(3)] $\overline{F}_{n-1} \cap \widehat{E}(\ell)=\{0\}$.
\item[(4)] $\overline{F}_n \cap \widehat{E}(\ell)=F_n$.
\end{itemize}
\end{proposition}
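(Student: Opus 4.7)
The four parts split into two thematically distinct groups: (1) and (2) are parity statements about the degrees of elements in the graded piece $F_i$, while (3) and (4) determine the intersection of $\overline{F}_{n-1}$ and $\overline{F}_n$ with $\widehat{E}(\ell)$. In both groups, the plan is to work directly with the explicit additive basis $\{x \cdot \overline{Q}_I \overline{u}_n : I \subsetneq \Delta_n,\ x \text{ an } R\text{-monomial}\}$ of $F_{\infty, 0}$ and to exploit that the weight function $w$ takes values in $n \cdot \mathbb{Z}_{\geq 0}$ on $R$, since $w(c_{\bullet, j}) = w(f_n) = 0$ and $w(e) = n$ (where $e$ denotes $e_n$ or $e_{n-1}$ according to $H$). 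I write $R_0$ for the weight-zero part of $R$.

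For (1) and (2), I would first verify that every generator of $R$ has even degree, an immediate calculation using $|Q_j| = 2p^j - 1$, $|dt_i| = 1$, together with the definitions of $c_{\bullet, i}$ and $f_n = \mathcal{O}_{n-1}(dt_1)$. Next, in all three cases $H = SL_n(\mathbb{Z}/p), G_n, G_n'$, each $\overline{Q}_I \overline{u}_n$ has degree congruent to $n - |I|$ modulo $2$: the $f_n^{\pm 1}, f_n^{-p+1}, f_n^{p-2}$ corrections occurring in the definitions of $\overline{Q}_{n-1}$ and $\overline{u}_n$ all contribute even degree, so the parity agrees with the $SL_n$ case, where $|I|$ applications of odd-degree Milnor operations change the parity from $n$ to $n - |I|$. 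For $i < n$, the constraint $w(x) + |I| = i$ with $|I| < n$ forces $w(x) = 0$ and $|I| = i$, pinning the parity of every element of $F_i$ to $n - i$ and giving (1). For $i = n$, the requirement that $I$ be a proper subset of $\Delta_n$ excludes $|I| = n$, forcing $w(x) = n$ and $I = \emptyset$; this pins the parity to $n$ and gives (2).

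For (3) and (4), I would identify the basis elements of $F_i \cap \widehat{E}(\ell)$ explicitly. In (3), the conditions $|I| = n - 1$ and $\ell \notin I$ force $I = \Delta_n \setminus \{\ell\}$, so $\overline{Q}_I \overline{u}_n = z_\ell$ and $F_{n-1} \cap \widehat{E}(\ell) = R_0 \cdot z_\ell$; this is contained in the $R$-submodule $(z_\ell) = R \cdot z_\ell$, so its image in $\overline{F}_{n-1}$ vanishes. For (4), the analysis in (2) shows every basis element of $F_n$ has $I = \emptyset$, so $F_n \subset \widehat{E}(\ell)$ automatically; moreover, any nonzero $r \cdot z_\ell \in (z_\ell) \cap F_n$ would force $w(r) = 1$, impossible since $w$ on $R$ takes values in $n \cdot \mathbb{Z}_{\geq 0}$. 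Hence $\overline{F}_n \cong F_n$ and the intersection is all of $F_n$. The main conceptual point throughout is handling the three cases $H = SL_n(\mathbb{Z}/p), G_n, G_n'$ uniformly; this works precisely because every $f_n$-factor appearing in the definitions of $\overline{Q}_{n-1}$ and $\overline{u}_n$ has even degree and weight zero, so the parity and weight bookkeeping reduce in each case to that of the $SL_n$ case.
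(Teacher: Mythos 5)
Your proof is correct. The paper gives no argument for this proposition (it is asserted as ``easy to see'' right after the definitions), and your direct bookkeeping --- the monomial basis $x\,\overline{Q}_I\overline{u}_n$ of $F_{\infty,0}$, the fact that $w$ takes values in $n\mathbb{Z}_{\geq 0}$ on $R$ while the parity of $\overline{Q}_I\overline{u}_n$ is $n-|I|$ in all three cases $H=SL_n(\mathbb{Z}/p), G_n, G_n'$, and the reading $(z_\ell)=R\,z_\ell$ --- is exactly the intended verification, so it matches the paper's (omitted) approach.
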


The Milnor operation $Q_\ell$ induces a short exact sequence
\[
0 \to N_1 \stackrel{Q_\ell}{\longrightarrow}  N_0 \to R/(e_k)\oplus {F}_i \cap E(\ell) \to 0. \]
The multiplication by $z_\ell$ induces an isomorphism 
\[
0\to N_0\stackrel{z_\ell}{\longrightarrow} N_1 \to  \overline{F}_{n-2+i, i} \cap \widehat{E}(\ell) \to 0.
\]
We have the following proposition:



\begin{proposition}\label{prop-3-4}
For $(n, i)=(2, 0), (3, 1), (4, 2)$, 
there exist short exact sequences
\begin{itemize}
\item[(1)] \quad $\displaystyle 0\to N_0 \stackrel{z_{\ell}}{\longrightarrow}  N_1 \to N_1^{even}/(e_k)\to 0$,
\item[(2)] \quad $\displaystyle 0 \to N_1\stackrel{Q_{\ell}}{\longrightarrow}  N_0 \to N_0^{even}/(e_k)  \to 0$.
\end{itemize}
\end{proposition}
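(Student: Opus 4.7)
The plan is to establish both short exact sequences by identifying their cokernels with those of the pair
\begin{align*}
(\mathrm{A})&:\ 0 \to N_1 \xrightarrow{Q_\ell} N_0 \to R/(e_k) \oplus (F_i \cap E(\ell)) \to 0, \\
(\mathrm{B})&:\ 0 \to N_0 \xrightarrow{z_\ell} N_1 \to \overline{F}_{n-2+i, i} \cap \widehat{E}(\ell) \to 0,
\end{align*}
stated immediately before the proposition. Thus it suffices to prove the isomorphisms
\[
R/(e_k) \oplus (F_i \cap E(\ell)) \;\cong\; N_0^{even}/(e_k)
\quad\text{and}\quad
\overline{F}_{n-2+i, i} \cap \widehat{E}(\ell) \;\cong\; N_1^{even}/(e_k).
\]

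The central observation is that in each of the three cases $(n, i) \in \{(2, 0), (3, 1), (4, 2)\}$ one has $n - i = 2$, so the effective weight range of $F_{n-1+i, i}$ modulo $(e_k)$ is the length-two window $\{i, i+1\}$ with $i + 1 = n-1$. By Proposition~\ref{prop-3-3}(1), a stratum $F_j$ is entirely of even degree exactly when $n - j$ is even and is zero in even degree otherwise; among $\{i, i+1\}$ only $j = i$ has the correct parity. Consequently $F_{\infty, i}^{even}/(e_k) = F_i$.

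For part (2), since every element of $R$ is of even degree, the $R$-summand of $N_0 = R \oplus (F_{\infty, i} \cap E(\ell))$ contributes $R/(e_k)$ unchanged to $N_0^{even}/(e_k)$, and the central observation applied to $E(\ell)$ gives $F_i \cap E(\ell)$ from the second summand; assembling these matches the cokernel of $(\mathrm{A})$. For part (1), the same observation applied to $\widehat{E}(\ell)$ gives $N_1^{even}/(e_k) = F_i \cap \widehat{E}(\ell)$. On the other side, $\overline{F}_{n-2+i, i} \cap \widehat{E}(\ell)$ is the quotient of $F_{n-2+i, i} \cap \widehat{E}(\ell)$ by multiples of $z_\ell$. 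The odd-parity weight-$(n-1)$ stratum $F_{n-1} \cap \widehat{E}(\ell)$ coincides with $R \cdot z_\ell$ and is killed in the quotient by Proposition~\ref{prop-3-3}(3), so only the even-parity weight-$i$ stratum survives, matching the right-hand side.

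The main obstacle is the case $(n, i) = (4, 2)$, where the range $F_{n-2+i, i} = F_{4, 2}$ also contains the top stratum $F_n = F_4$. One must verify that this stratum contributes trivially to the cokernel, either because it lies in $(e_k)$ or because it is absorbed by $(z_\ell)$, using Proposition~\ref{prop-3-3}(2) and (4) together with the explicit multiplicative identity $z_\ell \cdot \overline{Q}_\ell \overline{u}_n = \pm e_k \overline{u}_n$ obtained from Lemma~\ref{lemma-3-1} and Remark~\ref{remark-3-2}. Once this bookkeeping of the $(e_k)$- and $(z_\ell)$-actions at the top weight is carried out, the two desired short exact sequences follow.
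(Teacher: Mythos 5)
Your overall strategy is the paper's: reduce to the two exact sequences displayed just before the proposition and identify their cokernels with $N_0^{even}/(e_k)$ and $N_1^{even}/(e_k)$ by the parity statements of Proposition~\ref{prop-3-3}, and your matching of the cokernels (coker\,$Q_\ell$ with $N_0^{even}/(e_k)$, coker\,$z_\ell$ with $N_1^{even}/(e_k)$) is the consistent one. The gap is in your ``central observation'' and in the way you propose to finish the case $(n,i)=(4,2)$, which is precisely the case the proposition is needed for ($E_6$, $E_7$).

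The claim $F_{\infty,i}^{even}/(e_k)=F_i$ tacitly assumes that every weight-$\geq i+2$ monomial of $F_{\infty,i}$ is $e_k$ times an element of $N_0\oplus N_1$. This is false: $e_k\overline{u}_n$ has weight $n\geq i$ and lies in $N_1=F_{\infty,i}\cap\widehat{E}(\ell)$ (for $E_6$ it is literally one of the listed $R$-module generators $e_3u_4$ of $\xi^*M_1$ in \S4), but its cofactor $\overline{u}_n$ has weight $0<i$ when $i=1,2$, so $e_k\overline{u}_n\notin e_k(N_0\oplus N_1)$. For $(3,1)$ you are rescued only because that stratum ($F_3$, the $e_3u_3$-classes) is of odd degree; for $(4,2)$ the stratum $F_4$, spanned by the $e_3\overline{u}_4$-classes, is of even degree by Proposition~\ref{prop-3-3}(2), lies in $\widehat{E}(\ell)$, and therefore survives in $N_1^{even}/(e_k)$. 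Thus your intermediate identity $N_1^{even}/(e_k)=F_i\cap\widehat{E}(\ell)$ is wrong for $(4,2)$: the correct statement is $N_1^{even}/(e_k)=(F_2\cap\widehat{E}(\ell))\oplus F_4$. Consequently the verification you defer --- that the top stratum ``contributes trivially to the cokernel, either because it lies in $(e_k)$ or because it is absorbed by $(z_\ell)$'' --- cannot be carried out: $e_3\overline{u}_4$ is not in $e_3(N_0\oplus N_1)$, and it is not in $z_\ell\cdot N_0$ either, since your identity $z_\ell\cdot\overline{Q}_\ell\overline{u}_4=\pm e_3\overline{u}_4$ uses the factor $\overline{Q}_\ell\overline{u}_4$, of weight $1<2$, which does not belong to $N_0$. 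The point is not that $F_4$ dies but that it occurs identically on both sides: by Proposition~\ref{prop-3-3}(4), $\overline{F}_4\cap\widehat{E}(\ell)=F_4$, so the cokernel of $z_\ell$ contains the same stratum, and the two contributions cancel against each other in the comparison --- this is exactly the entry $\overline{F}_{i+2}\cap\widehat{E}(\ell)=F_4^{even}=F_{i+2}^{even}$ in the paper's table. Replacing ``show $F_4$ contributes trivially'' by ``show $F_4$ appears in both cokernels and matches'' repairs your argument; as written, the $(4,2)$ case does not go through.
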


\begin{proof}
From the observation above, it suffices to show that 
\begin{align*}
R/(e_k) \oplus F_i \cap E(\ell) & = N_1^{even}/(e_k) \quad \mbox{and} \\
\overline{F}_{n-2+i, i} \cap \widehat{E}(\ell) &= N_0^{even}/(e_k).
\end{align*}
Since $(N_0\oplus N_1)^{even}/(e_k)=R/(e_k) \oplus F^{even}_{n-1+i, i}$, it suffices to show that 
\[
F_{n-1+i, i}^{even}=F_i \cap E(\ell) \oplus \overline{F}_{n-2+i, i}\cap \widehat{E}(\ell).
\]
By Proposition~\ref{prop-3-3}, we have the following table.
\[
 \renewcommand{\arraystretch}{1.3}
\begin{array}{c|c|c|c}
(n, i) & (2, 0) & (3, 1) & (4, 2) \\
\hline
F_i \cap E(\ell) & F_0^{even} \cap E(\ell) & F_1^{even} \cap E(\ell) & F_{2}^{even}\cap E(\ell) \\
\hline
\overline{F}_{i} \cap \widehat{E}(\ell) & F_0^{even} \cap \widehat{E}(\ell) & F_1^{even} \cap \widehat{E}(\ell) & F_2^{even} \cap \widehat{E}(\ell) \\
\overline{F}_{i+1} \cap \widehat{E}(\ell) & - &  0 & 0 \\
\overline{F}_{i+2} \cap \widehat{E}(\ell) & - & - &  F_4^{even}\\
\hline
F_{i}^{even} & F_0^{even} &  F_1^{even} &  F_2^{even} \\
F_{i+1}^{even} &0 & 0 & 0  \\
F_{i+2}^{even} & - & 0 & F_4^{even} \\
F_{i+3}^{even} & - & - & 0 
\end{array}
\]
This table completes the proof.
\end{proof}

\section{Cohomology of  classifying spaces}



Now, we investigate $H^{*}(BG)$ and $H^{*}(BA)^{W(A)}$ for $(G, p)$ in Theorem~\ref{main}.
Throughout the rest of  this section, 
we put  
\vspace{.5ex}
\\
{
 \renewcommand{\arraystretch}{1.3}
\begin{tabular}{lllll}
$k=2$, & $n=2$, & $R=\mathbb{Z}/p[e_2, c_{2, 1}]$ & for & $(G, p)=(PU(p),p)$,\\
$k=3$,  & $n=3$, & $R=\mathbb{Z}/p[e_3, c_{3, 1}, c_{3,2}]$ & for & $(G, p)=(F_4, 3), (E_8, 5)$, \\
$k=3$, & $n=4$, & $R=\mathbb{Z}/p[e_3, c_{3, 1}, c_{3,2}, f_4]$ & for & $(G, p)=(E_6, 3)$ and \\
$k=3$, & $n=4$, & $R=\mathbb{Z}/p[e_3, c_{3, 1}, c_{3,2}, {f_4}^2]$ & for & $(G,p)=(E_7, 3)$.\\
\end{tabular}
}
\\

\noindent Then, the number $n$ is nothing but the rank of the maximal non-toral elementary $p$-subgroup $A$.
We recall from \cite{kameko-mimura-2007} the following.

\begin{theorem}\label{kameko-mimura}
The ring of invariants $H^{*}(BA)^{W(A)}$ is
\begin{align*}
R \{1,  Q_I u_2\}
& \quad \mbox{for $(G, p)=(PU(p), p)$, }
\end{align*}
where $I$ ranges over all the proper subsets of $\Delta_2$,
\begin{align*}
R\{ 1, Q_I u_3\} & \quad 
\mbox{for $(G, p)=(F_4, 3)$, $(E_8, 5)$,} \\
R \{ 
1, Q_I u_3, Q_J u_4 \}& \quad 
\mbox{for $(G, p)=(E_6, 3)$ and }
\\
R \{ 
1, Q_I u_3, f_4 Q_J u_4 \}
& \quad 
\mbox{for $(G, p)=(E_7, 3)$,}
\end{align*}
where $I$ ranges over all the proper subsets of $\Delta_3$ and $J$ ranges over all the subsets of $\Delta_3$.
\end{theorem}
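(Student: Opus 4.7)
My plan is to reduce each case to the general invariant formula $H^{*}(BA_n)^{H}=R\{ 1,\overline{Q}_I\overline{u}_n\}$ collected in Section~3. Using the descriptions of the Weyl groups $W(A)$ recalled at the start of the section, I identify $(PU(p),p)$ with $(H,n)=(SL_2(\mathbb{Z}/p),2)$; the pair $(F_4,3)$ and $(E_8,5)$ with $(SL_3(\mathbb{Z}/p),3)$; the case $(E_6,3)$ with $(G_4,4)$; and $(E_7,3)$ with $(G_4',4)$. In each case it only remains to unfold the definitions of $\overline{u}_n$ and $\overline{Q}_i$ and to check that the generating set matches the one stated in the theorem.

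In the two $SL_n$-rows, $\overline{u}_n=u_n$ and $\overline{Q}_i=Q_i$, so the generic generating set $\{ 1,\overline{Q}_I\overline{u}_n\}$ becomes $\{ 1,Q_I u_n\}$ with $I$ ranging over proper subsets of $\Delta_n$, which is exactly the desired form with $n=2$ and $n=3$. Likewise $R$ specializes to $\mathbb{Z}/p[e_2,c_{2,1}]$ and $\mathbb{Z}/p[e_3,c_{3,1},c_{3,2}]$, agreeing with the list placed at the start of Section~4.

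For $(E_6,3)$ with $H=G_4$, the generic generating set is $\{ 1,Q_J u_3,Q_K u_4\}$ with $J\subset\Delta_3$ proper and $K\subseteq\Delta_3$ arbitrary, which matches the statement. For $(E_7,3)$ with $H=G_4'$, the generic generating set involves $f_4^{p-2}Q_K u_4$; since $p=3$ we have $f_4^{p-2}=f_4$, so this reduces to $\{ 1,Q_J u_3,f_4 Q_K u_4\}$ as required. The ring $R$ in these two cases specializes respectively to $\mathbb{Z}/p[e_3,c_{3,1},c_{3,2},f_4]$ and $\mathbb{Z}/p[e_3,c_{3,1},c_{3,2},f_4^{2}]$, in line with Section~4.

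The substantive content, carried out in \cite{kameko-mimura-2007}, is the justification of the general formulas underlying this translation. The polynomial part is settled by Dickson's theorem and its refinements; the exterior classes $\overline{Q}_I\overline{u}_n$ are then shown to furnish a free $R$-module basis for the full ring of invariants, with the counting verified by a Poincar\'e series comparison over the field of fractions of $R$. The hardest aspect there is the uniform treatment of $SL_n(\mathbb{Z}/p)$, $G_n$, and $G_n'$; once $\overline{u}_n$ and $\overline{Q}_{n-1}$ are chosen as in Section~3, the same argument handles all three groups in parallel, and for the present theorem no further work is required beyond the specialization above.
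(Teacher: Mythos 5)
Your proposal is correct and matches the paper's treatment: the paper gives no independent argument here, but simply recalls the theorem from \cite{kameko-mimura-2007}, i.e.\ it combines the identification of $W(A)$ with $SL_2(\mathbb{Z}/p)$, $SL_3(\mathbb{Z}/p)$, $G_4$, $G_4'$ (from \cite{agmv}) with the general formulas $H^{*}(BA_n)^{H}=R\{1,\overline{Q}_I\overline{u}_n\}$ recalled in \S3, exactly as you do. Your case-by-case specialization (including $f_4^{p-2}=f_4$ and $f_4^{p-1}=f_4^2$ for $p=3$) is accurate, so nothing further is needed.
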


For $(G, p)=(PU(p), p)$, the induced homomorphism $\xi^*$ defined in the previous section
 is an epimorphism but it is not an epimorphism for $(G, p)=(F_4, 3)$, $(E_6, 3)$, $(E_7, 3)$ and $(E_8, 5)$.



By comparing odd degree generators of $H^{*}(BG)$ with  the image of the induced homomorphism
$\xi^*$,  it is easy to see the following theorem. For $H^{*}(BG)$, we refer the reader to the work of Mimura and Sambe in \cite{mimura-sambe-1979} and \cite{mimura-sambe-1981}, 
or Kameko \cite{kameko-2010}.  

\begin{theorem}\label{detect}
The induced homomorphism 
\[
H^{odd}(BG) \to H^{odd}(BA)^{W(A)}
\]
is a monomorphism and 
the image of
\[
\xi^*: H^{*}(BG) \to H^{*}(BA)^{W(A)}
\]
 is generated by
$Q_0u_3$, $c_{3,1}$ and $c_{3,2}$   
for $(G, p)=(F_4, 3)$ and for $(E_8, 5)$,
$Q_0u_3$, $c_{3,1}$ and $c_{3,2}$, $\mathcal{O}_3(dt_1)^2$   
for $(G,p)=(E_7, 3)$,
and $Q_0u_3$, $c_{3,1}$, $c_{3,2}$, $Q_0Q_1u_4$, $\mathcal{O}_3(dt_1)$  for $(G, p)=(E_6, 3)$, 
as an algebra over the Steenrod algebra.
\end{theorem}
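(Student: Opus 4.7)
The plan is to verify the theorem by direct computation on a set of algebra generators of $H^{*}(BG)$ over the mod $p$ Steenrod algebra $\mathcal{A}_p$. First I would recall from \cite{mimura-sambe-1979}, \cite{mimura-sambe-1981} and \cite{kameko-2010} explicit presentations of $H^{*}(BG)$ for each of the five pairs $(G,p)$ in Theorem~\ref{main}. In each case one has a small set of $\mathcal{A}_p$-generators: the class $y_2$ together with one polynomial generator for $(PU(p),p)$, and the class $y_4$ together with a few even-degree polynomial generators (plus an additional degree $8$ class for $E_6$, and a single further class for $E_7$) in the exceptional cases. I would then compute $\xi^*$ of each such generator.

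Since $\xi^*$ is a morphism of algebras over $\mathcal{A}_p$, its image is the $\mathcal{A}_p$-subalgebra of $H^{*}(BA)^{W(A)}$ generated by the images just computed. Under the description of $H^{*}(BA)^{W(A)}$ in Theorem~\ref{kameko-mimura}, the image of $y_4$ (resp.\ $y_2$) is, up to a non-zero scalar, the class $Q_0 u_3$ (resp.\ $Q_0 u_2$), since $Q_0 = \beta$ naturally commutes with restriction. The even-degree polynomial generators of $H^{*}(BG)$ restrict to the Dickson invariants $c_{n,j}$ (and to $f_n$ or $f_n^2$ in the $E_6$ and $E_7$ cases) via the identification of these invariants with Chern-like classes of the defining representation of $A$. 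Applying the Milnor operations $Q_i$ and the reduced powers $\wp^i$ to these images then produces precisely the generating lists asserted in the statement; equivalently, one checks that every element of $R$ and every $\overline{Q}_I \overline{u}_n$ with $I$ among the prescribed index sets is obtained from these initial images by $\mathcal{A}_p$-action.

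For the monomorphism claim on $H^{odd}(BG)$, I would use the fact, again from \cite{mimura-sambe-1979}, \cite{mimura-sambe-1981} and \cite{kameko-2010}, that the odd-degree part of $H^{*}(BG)$ admits an explicit additive basis consisting of monomials $Q_I y \cdot m$ with $y \in \{y_2, y_4\}$, $I$ in a specified collection of index sets, and $m$ ranging over monomials in the even-degree polynomial generators. Under $\xi^*$ such a basis element maps to a non-zero scalar multiple of $Q_I Q_0 u_n$ times a monomial in the $c_{n,j}$'s (and $f_4$-powers where applicable). These images are linearly independent in $H^{*}(BA)^{W(A)}$ by Theorem~\ref{kameko-mimura}, so no non-trivial odd-degree class can lie in $\mathrm{Ker}\,\xi^*$.

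The main technical obstacle I expect is the case $(G,p)=(E_7,3)$. Here $W(A)=G_4'$ rather than $G_4$, so $f_4 \notin H^{*}(BA)^{W(A)}$ while $f_4^{p-1}=f_4^2$ does lie in the ring of invariants; correspondingly the list of generators contains $\mathcal{O}_3(dt_1)^2$ rather than $\mathcal{O}_3(dt_1)$. One has to pin down exactly which class of $H^{*}(BE_7)$ restricts to $\mathcal{O}_3(dt_1)^2$ and then confirm that no further $\mathcal{A}_p$-generator is needed. An analogous but milder bookkeeping issue appears for $(E_6,3)$, where the extra class $Q_0 Q_1 u_4$ in the image must be traced back to the additional degree $8$ generator of $H^{*}(BE_6)$. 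Once these two cases are settled, the remaining pairs $(PU(p),p)$, $(F_4,3)$ and $(E_8,5)$ follow by the same pattern with fewer generators to track.
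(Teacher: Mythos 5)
Your proposal is essentially the paper's own argument: the paper proves Theorem~\ref{detect} precisely by comparing the known presentations of $H^{*}(BG)$ (Mimura--Sambe, Kameko) with the ring of invariants $H^{*}(BA)^{W(A)}$ of Theorem~\ref{kameko-mimura}, computing $\xi^*$ on the few $\mathcal{A}_p$-algebra generators and tracking the odd-degree classes, exactly as you outline, including the extra bookkeeping for $(E_6,3)$ and $(E_7,3)$ where $f_4$ versus $f_4^{2}=\mathcal{O}_3(dt_1)^2$ enters. The only caution is that the images of products of even generators are not literally monomials in the $c_{n,j}$'s but reduce via the multiplication rule of Lemma~\ref{lemma-3-1} to $R$-multiples of the $\overline{Q}_I\overline{u}_n$'s; this is a detail of the same comparison, not a different method.
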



We choose elements $e_k$ in $H^{*}(BG)$ as in Theorem~\ref{main}. We choose elements corresponding to other generators of $R$ in $H^{*}(BG)$
and we consider $R$ as a subalgebra of $H^{*}(BG)$.

For $(G, p)=(PU(p), p)$, 
the rank of the elementary abelian $p$-subgroup $A$ is $2$. 
Since $F_{1, 0}=R/(e_2) \{ u_2, Q_0 u_2, Q_1 u_2\}$, we put
\begin{align*}
M_0 &= R\{ 1, y_{2p+1} \}, \\
M_1&=R\{ y_{3}, y_2\},
\end{align*}
where $y_2$ is the generator of $H^2(BG)$ such that $\xi^*(y_2)=u_2$ and $y_3=Q_0 y_2$, $y_{2p+1}=Q_1 y_2$.
Then, we have 
\begin{align*}
\xi^*M_0&=R \{ 1, Q_1 u_2\}=R\oplus (F_{\infty,  0}\cap {E}(1)), \\
\xi^*M_1&=R \{ Q_0u_2,  u_2\}=F_{\infty,  0} \cap \widehat{E}(1).
\end{align*}

For $(G, p)=(F_4, 3)$ and $(E_8, 5)$, the rank of the elementary abelian $p$-subgroup $A$ is $3$. 
Since $F_{3, 1}=R/(e_3) \{ Q_0 u_3, Q_1 u_3, Q_2 u_3, Q_0 Q_1 u_3, Q_0 Q_2 u_3, Q_1 Q_2 u_3, e_3 u_3 \}$, 
 we put
 \begin{align*}
 M_0 &= R\{ 1, y_{2p^2+2}, y_{2p^2+3}, y_{2p^2+2p+1}\}, \\
 M_1 & = R \{ y_{2p+3}, y_{2p+3}y_{2p^2+2}, y_4, y_{2p+2} \}, 
 \end{align*}
 where $y_4$ is the generator of $H^4(BG)$ such that $\xi^*(y_4)=Q_0u_3$, $y_{2p+2}=\wp^1 y_4$, $y_{2p+3}=-Q_1y_4$, 
$ y_{2p^2+2}=\wp^p\wp^1 y_4$, $y_{2p^2+3}=-Q_2y_4$, $y_{2p^2+2p+1}=-Q_2 y_{2p+2}$.
 Then, we have
\begin{align*}
\xi^*M_0&=R \{ 1, Q_2 u_3, Q_0Q_2 u_3, Q_1Q_2u_3\}=R\oplus (F_{\infty,  1} \cap {E}(2)), \\
\xi^*M_1&=R \{ Q_0Q_1u_3, e_3 u_3, Q_0u_3, Q_1u_3 \}=F_{\infty,  1}\cap \widehat{E}(2).
\end{align*}


For $(G, p)=(E_6, 3)$ and $(E_7, 3)$, the rank of the elementary abelian $p$-subgroup $A$ is $4$.
First, we consider the case $(G, p)=(E_6, 3)$.  Let $\overline{\mathcal{O}}_3=f_4^{-1} \mathcal{O}_3$.
Since 
\[
F_{5,2}=R/(e_3)\{\overline{Q}_I\overline{u}_4,  e_3 \overline{Q}_J\overline{u}_4\},
\]
where $w(\overline{Q}_I\overline{u}_4)=2, 3$ and $w(\overline{Q}_J\overline{u}_4)=0, 1$, 
we denote by  $y_4$, $y_{10}$  the generators of $H^4(BG)$, $H^{10}(BG)$  such that $\xi^*(y_4)=Q_0\overline{\mathcal{O}}_3 u_4=Q_0u_3$, $\xi^{*}(y_{10})=Q_0Q_1u_4$, respectively.
Let $y_9=-Q_1 y_4$,  $y_{8}=\wp^1 y_4$,  $y_{20}=\wp^3 \wp^1 y_4$ and let $y_{22}=\wp^3 y_{10}$, $y_{26}=\wp^1 y_{22}$.
Let $y_{30}=y_{10}y_{20}$.
Let
\begin{align*}
M_0&=R\{   1, y_{22}, y_{26}, y_{20}, Q_2 y_{10}, Q_2 y_4, Q_2 y_8,Q_2y_{30} \}, \\
M_1&= R\{  y_9,  y_9y_{22}, y_9y_{26}, y_9y_{20},  y_{10}, y_{4}, y_8, y_{30} \}.
\end{align*}
Then, we have 
\begin{align*}
\xi^*M_0&=R\{ 1, Q_iQ_2 u_4, Q_2 \overline{\mathcal{O}}_3 u_4, Q_2\overline{\mathcal{O}}_3u_4, 
Q_0Q_1Q_2 u_4,  
Q_iQ_2 \overline{\mathcal{O}}_3 u_4, 
e_3 Q_2u_4
\}\\
&=(R\oplus F_{\infty,  2})\cap {E}(2), \\
\xi^*M_1&=R\{ Q_0Q_1\overline{\mathcal{O}}_3u_4, e_3 \overline{\mathcal{O}}_3u_4, Q_i\overline{\mathcal{O}}_3u_4, Q_0Q_1u_4, e_3u_4, e_3Q_iu_4\}
\\
&=F_{\infty,  2}\cap \widehat{E}(2),
\end{align*}
where $i$ ranges over $\{0,1\}$.

As for $(G, p)=(E_7, 3)$, 
let 
\begin{align*}
M_0 &= R\{ 1, y_{20}, Q_2 y_4, Q_2 y_8 \}, \\
M_1 & = R\{ y_9, y_9 y_{20}, y_4, y_8\}.
\end{align*}
Then, we have 
\begin{align*}
\xi^*M_0&=R\{ 1, Q_2\overline{\mathcal{O}}_3u_4, Q_iQ_2\overline{\mathcal{O}}_3u_4,  \}=R\oplus (F_{\infty,  2}\cap (\overline{\mathcal{O}}_3 u_4) \cap {E}(2)), \\
\xi^*M_1&=R\{ Q_0Q_1\overline{\mathcal{O}}_3u_4, e_3 \overline{\mathcal{O}}_3u_4, Q_i\overline{\mathcal{O}}_3u_4\}=F_{\infty,  2}\cap (\overline{\mathcal{O}}_3 u_4) \cap \widehat{E}(2),
\end{align*}
where $i$ ranges over $\{ 0, 1\}$.

One can verify the following proposition by direct computations but it also follows immediately from 
Proposition~\ref{prop-3-4}.



\begin{proposition} \label{image} 
For $(G, p)$ in Theorem~{\rm \ref{main}}, there holds 
\begin{itemize}
\item[(1)] \quad $\xi^{*}M_0\oplus \xi^*M_1=\mathrm{Im}\, \xi^*$.
\end{itemize}
Moreover, there exist the following short exact sequences{\rm :}
\begin{itemize}
\item[(2)] \quad $\displaystyle 0 \to M_0 \stackrel{y_{m+1}}{\longrightarrow} M_1 \to M_1^{even}/(e_k) \to 0,
$
\item[(3)] \quad $\displaystyle  0 \to M_1\stackrel{Q_{k-1}}{\longrightarrow} M_0 \to M_0^{even}/(e_k) \to 0,
$
\end{itemize}
where $m=2$, $k=2$ for $(G, p)=(PU(p),p)$ and $m=2p+2$, $k=3$ for $(G,p)=(F_4,3), (E_6, 3), (E_7, 3), (E_8, 5)$.
\end{proposition}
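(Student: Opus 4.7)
The plan is to derive Proposition~\ref{image} directly from Proposition~\ref{prop-3-4} by transporting the content through $\xi^{*}$. For each $(G,p)$ in Theorem~\ref{main} I fix the dictionary $(n,i,\ell)=(2,0,1)$ for $(PU(p),p)$, $(3,1,2)$ for $(F_4,3)$ and $(E_8,5)$, and $(4,2,2)$ for $(E_6,3)$ and $(E_7,3)$. With this choice, the explicit calculations immediately preceding the proposition identify $\xi^{*}M_0=N_0=R\oplus(F_{\infty,i}\cap E(\ell))$ and $\xi^{*}M_1=N_1=F_{\infty,i}\cap\widehat{E}(\ell)$; a short calculation gives $\xi^{*}(y_{m+1})=z_\ell$, using $y_{m+1}=\pm Q_{k-2}y_m$, $\xi^{*}(y_m)=Q_0 u_n$ or $Q_0\overline{\mathcal{O}}_3 u_4$, together with the defining formula for $z_\ell$; naturality makes the Milnor operation $Q_{k-1}$ on $M_i$ correspond to $Q_\ell$ on $N_i$.

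Statement (1) then follows at once: $E(\ell)$ and $\widehat{E}(\ell)$ are spanned by complementary monomial bases of $F_{\infty,i}$, so $N_0\cap N_1=0$, while Theorem~\ref{detect} guarantees $\mathrm{Im}\,\xi^{*}=N_0+N_1$. For (2) and (3), Proposition~\ref{prop-3-4} applied to $(N_0,N_1)$ with the chosen $(n,i)$ provides the short exact sequences
\[
0 \to N_0 \stackrel{z_\ell}{\longrightarrow} N_1 \to N_1^{even}/(e_k) \to 0, \qquad 0 \to N_1 \stackrel{Q_\ell}{\longrightarrow} N_0 \to N_0^{even}/(e_k) \to 0,
\]
which under the dictionary above are precisely the $\xi^{*}$-images of the sequences asserted in Proposition~\ref{image}(2), (3).

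The last step is to pull these sequences back from $N_i$ to $M_i$, for which it suffices to show $\xi^{*}\colon M_i\to N_i$ is an isomorphism of $R$-modules. Surjectivity is built into the construction of $M_0$ and $M_1$. Injectivity on the odd-degree part of $M_i$ is the content of Theorem~\ref{detect}. On the even-degree part I would inspect the explicit generator lists (for example $1$, $y_{20}$, $y_{22}$, $y_{26}$ in the $(E_6,3)$ case) and observe that, by the multiplicative formulas of Lemma~\ref{lemma-3-1} and Remark~\ref{remark-3-2}, each such generator maps to a distinct $R$-free basis element of $N_i^{even}$. Once the isomorphism $\xi^{*}\colon M_i\cong N_i$ is established, the short exact sequences above transfer verbatim, completing the proof. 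The main obstacle is this final even-degree injectivity check, which, while mechanical, must be carried out separately for each of the five pairs $(G,p)$ listed in Theorem~\ref{main}.
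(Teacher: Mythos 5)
Your reduction is exactly the paper's own proof: the paper disposes of Proposition~\ref{image} in one sentence, saying it follows from Proposition~\ref{prop-3-4} via the identifications $\xi^*M_0$, $\xi^*M_1$ with $R\oplus(F_{\infty,i}\cap E(\ell))$ and $F_{\infty,i}\cap\widehat{E}(\ell)$ displayed just before the statement, and your dictionary $(n,i,\ell)=(2,0,1),(3,1,2),(4,2,2)$ with $\xi^*(y_{m+1})=z_\ell$ and $Q_{k-1}=Q_\ell$ is the intended one (for $(E_7,3)$ one really applies the argument of Proposition~\ref{prop-3-4} restricted to the summand $(\overline{\mathcal{O}}_3u_4)$, as the paper does implicitly). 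One comment on where you locate the residual work: the even-degree injectivity of $\xi^*\colon M_i\to\xi^*M_i$, which you call the main obstacle, costs nothing and needs no case-by-case inspection. The displayed generators of $\xi^*M_i$ are $R$-multiples of distinct members of the $R$-basis $\{1,\overline{Q}_I\overline{u}_n\}$ of $H^{*}(BA)^{W(A)}$, so $\xi^*M_i$ is $R$-free on them; any $R$-linear relation among the chosen generators of $M_i$ inside $H^{*}(BG)$ would map to a relation in a free module and hence have zero coefficients (using that $R$ is chosen in $H^{*}(BG)$ so that $\xi^*|_R$ is injective).

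The step that genuinely needs an argument, and which neither your phrase ``transfer verbatim'' nor the paper's one-liner supplies, is the well-definedness of the maps in (2) and (3): one must know that $y_{m+1}\cdot M_0\subseteq M_1$ and $Q_{k-1}M_1\subseteq M_0$ \emph{inside} $H^{*}(BG)$. The isomorphisms $\xi^*\colon M_i\to\xi^*M_i$ do not give this, because Theorem~\ref{detect} provides injectivity of $\xi^*$ only in odd degrees; when $x\in M_0^{odd}$, the even-degree class $y_{m+1}x$ is only determined modulo $\mathrm{Ker}\,\xi^*$ by its image $z_\ell\xi^*(x)$, and nothing established so far rules out an error term in $\mathrm{Ker}\,\xi^*$ (the same issue arises for $Q_{k-1}$ applied to $M_1^{odd}$). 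These error terms can be killed either by appealing to the known ring structure of $H^{*}(BG)$ (the paper's ``direct computations'' alternative) or by short formal arguments in the spirit of Lemma~\ref{null}: for example, for $(F_4,3)$ the odd-degree product $y_4\cdot Q_1y_4$ has trivial image, hence vanishes by Theorem~\ref{detect}, and applying $Q_2$ with the Leibniz rule yields $Q_2y_4\cdot Q_1y_4=e_3y_4$, so $y_{2p+3}\,y_{2p^2+3}=\pm e_3y_4\in M_1$, which is exactly the kind of even-degree containment needed. With those verifications added, your argument is complete and coincides with the paper's.
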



\section{The spectral sequence}

In this section, we complete the proof of Theorem~\ref{main} by computing the Leray-Serre spectral sequence
associated with
\[
G/T \stackrel{\iota}{\longrightarrow} BT\stackrel{\eta}{\longrightarrow}
BG
\]
for $(G, p)$ in Theorem~\ref{main}.
We put $m=2$, $k=2$ for $(G, p)=(PU(p),p)$ and $m=2p+2$, $k=3$ for $(G,p)=(F_4,3), (E_6, 3), (E_7, 3), (E_8, 5)$.

The $E_2$-term of the spectral sequence is given by 
\[
E_2=H^{*}(BG) \otimes H^{*'}(G/T) 
\]
as an $H^{*}(BG)\otimes S$-algebra. The algebra generator is $1\otimes x_m$. 
So, the first nontrivial differential is determined by $d_r(1\otimes x_m)$ for some $r\geq 2$.


\begin{proposition} \label{first-differential}
For $r< m+1$, $d_r=0$. The first nontrivial differential is $d_{m+1}$ and 
there holds $$d_{m+1}(1\otimes x_m)=\alpha (y_{m+1}\otimes 1)$$ for some $\alpha\not =0\in \mathbb{Z}/p$.
\end{proposition}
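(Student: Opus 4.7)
The plan is to exploit the multiplicative structure of the Serre spectral sequence together with the structure of $H^{*}(G/T)$ given by Theorem~\ref{flag}. By the Leibniz rule, every $d_r$ is determined by its values on algebra generators of $E_r$. Elements of $H^{*}(BG)\otimes 1$ lie in $E_r^{*,0}$ and are therefore permanent cycles, while every element of $S$ lies in the image of the edge homomorphism $\iota^{*}:H^{*}(BT)\to H^{*}(G/T)$ and hence survives to $E_\infty^{0,*}$, so $d_r(1\otimes s)=0$ for $s\in S$. Consequently the only algebra generator whose behaviour must be analysed is $1\otimes x_m$, and it suffices to compute $d_r(1\otimes x_m)$.

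To show $d_r(1\otimes x_m)=0$ for $2\leq r<m+1$, I would observe that the target $E_r^{r,m-r+1}$ is a subquotient of $H^{r}(BG)\otimes H^{m-r+1}(G/T)$. Since $m$ is even, if $r$ is even then $m-r+1$ is odd and $H^{odd}(G/T)=0$ by Theorem~\ref{flag}, so the target vanishes. If $r$ is odd with $r<m+1$, then from the explicit descriptions of $H^{*}(BG)$ in \cite{mimura-sambe-1979,mimura-sambe-1981,kameko-2010} the lowest odd-degree class is $y_{m+1}$: it equals $Q_0 y_2$ in degree $3=m+1$ for $(PU(p),p)$, and $-Q_1 y_4$ in degree $2p+3=m+1$ for the exceptional cases (note $Q_0 y_4=\beta y_4=0$ because $y_4$ lifts to an integral class). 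Hence $H^{r}(BG)=0$ for odd $r<m+1$, and the target again vanishes.

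For $r=m+1$ the target $E_{m+1}^{m+1,0}$ is a subquotient of $H^{m+1}(BG)$, and the inspection above shows $H^{m+1}(BG)=\mathbb{Z}/p\langle y_{m+1}\rangle$. Therefore $d_{m+1}(1\otimes x_m)=\alpha(y_{m+1}\otimes 1)$ for a unique $\alpha\in\mathbb{Z}/p$. To see $\alpha\neq 0$, I would argue by contradiction: if $\alpha=0$, then $y_{m+1}\otimes 1$ cannot be hit by any differential. Indeed, any incoming $d_r$ would originate in $E_r^{m+1-r,r-1}$ for some $r\leq m+1$; for even $r$ the fibre degree $r-1$ is odd so the source vanishes, and for odd $r<m+1$ the fibre degree $r-1<m$ lies wholly in $S$, whose elements are permanent cycles, forcing $d_r=0$ on the source by the Leibniz rule. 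The only remaining possibility is $r=m+1$ applied to $1\otimes x_m$, which we are assuming is zero. Hence $y_{m+1}\otimes 1$ would survive to $E_\infty^{m+1,0}\subset gr\,H^{m+1}(BT)=0$, a contradiction.

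The main subtlety in this argument is extracting the vanishing $H^{r}(BG)=0$ for odd $r<m+1$ from the literature on $H^{*}(BG)$ for the cases in Theorem~\ref{main}; once this (essentially a short check on the known generators) is in hand, everything else is a formal consequence of the multiplicativity of the Serre spectral sequence and of the fact that $S$ consists of permanent cycles.
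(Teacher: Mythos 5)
Your argument is correct, and its second half (the nonvanishing of $\alpha$) is essentially the paper's own argument: if $y_{m+1}\otimes 1$ were not hit by $d_{m+1}(1\otimes x_m)$, nothing could ever hit it, so it would survive to $E_\infty^{m+1,0}$, contradicting the evenness of $H^{*}(BT)$. The first half, however, takes a genuinely different route. You kill $d_r(1\otimes x_m)$ for $2\le r<m+1$ by showing the targets $E_r^{r,m-r+1}$ vanish outright: for even $r$ because $H^{odd}(G/T)=0$ by Theorem~\ref{flag}, and for odd $r$ because $H^{odd}(BG)=0$ in degrees below $m+1$, a fact you must extract from the explicit computations of $H^{*}(BG)$ in \cite{toda-1973}, \cite{mimura-sambe-1979}, \cite{mimura-sambe-1981}, \cite{kameko-2010} (your claims are indeed true in all five cases, as is the one-dimensionality of $H^{m+1}(BG)$ that you use to write the differential as $\alpha(y_{m+1}\otimes 1)$, a point the paper leaves implicit). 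The paper instead argues by contradiction without any such input: if $1\otimes x_m$ supported a differential before $E_{m+2}$, then in total degrees $\le m$ the subsequent pages would be generated as an $H^{*}(BG)\otimes S$-module by $1\otimes 1$, i.e.\ by permanent cycles, so no later differential could reach total degree $m+1$; hence $y_{m+1}\otimes 1$ would survive, again contradicting $E_\infty^{odd}=\{0\}$. The trade-off is clear: your proof is more explicit and pins down the relevant bidegrees, at the cost of importing low-degree knowledge of $H^{*}(BG)$ from the literature; the paper's proof uses only the existence of the nonzero odd class $y_{m+1}$ together with the fact that base classes and elements of $S=\mathrm{Im}\,\iota^{*}$ are permanent cycles, which fits the paper's stated goal of avoiding explicit computations as far as possible.
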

\begin{proof}
Suppose that $d_{r_0}(1\otimes x_m) \not =0$ for some $r_0<m+1$. Then, up to degree $\leq m$, $E_{r_0+1}$-term is generated by $1\otimes 1$ as an $H^{*}(BG)\otimes S$-module. So, for $r_1\geq r_0$, $\mathrm{Im}\, d_{r_1}$ does not contain any element of degree less than or equal to $m+1$. Hence, $y_{m+1}\otimes 1$ survives to the $E_\infty$-term. Then, $\eta^*(y_{m+1})\not=0$. This contradicts the fact $E_{\infty}^{odd}=\{0\}$, since $\deg y_{m+1}=m+1$ is odd.
Therefore, we have $d_r(1\otimes x_m)=0$ for $r<m+1$.

Next, we verify that $d_{m+1}(1\otimes x_m)=\alpha ( y_{m+1}\otimes 1) $ for some $\alpha \not=0$ in $\mathbb{Z}/p$.
If $\mathrm{Im}\, d_{m+1}$ does not contain $y_{m+1}\otimes 1$, then up to degree $\leq m+1$, the spectral sequence collapses at the $E_{m+2}$-level and $y_{m+1}\otimes 1$ survives to the $E_{\infty}$-term. As in the above, it is a contradiction. Hence, the proposition holds.
\end{proof}

To consider the next nontrivial differential, first we show the following lemmas.



\begin{lemma}\label{null}
Both 
\begin{itemize}
\item[(1)] \quad the multiplication by $y_{m+1}$ and 
\item[(2)] \quad the multiplication by  $e_k$ 
\end{itemize}
are trivial  on $\mathrm{Ker}\,\xi^*$.
\end{lemma}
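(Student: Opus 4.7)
The plan is to reduce both statements to the odd-degree injectivity of $\xi^*$ asserted in Theorem~\ref{detect}: since $\xi^*$ is a ring homomorphism that commutes with the Steenrod operations and its restriction $H^{odd}(BG)\to H^{odd}(BA)^{W(A)}$ is a monomorphism, the kernel $\mathrm{Ker}\,\xi^*$ is concentrated in even degrees.

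Assertion (1) is then immediate. For $x\in \mathrm{Ker}\,\xi^*$, the product $y_{m+1}\cdot x$ has odd total degree (since $|y_{m+1}|=m+1$ is odd and $|x|$ is even), and $\xi^*(y_{m+1}\cdot x)=\xi^*(y_{m+1})\cdot \xi^*(x)=0$, so the odd-degree injectivity forces $y_{m+1}\cdot x=0$.

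For (2), I plan to bootstrap from (1) using the derivation property of the Milnor operation $Q_{k-1}$. First, $Q_{k-1}(x)$ is an odd-degree class in $\mathrm{Ker}\,\xi^*$, because $\xi^*$ commutes with $Q_{k-1}$, and thus $Q_{k-1}(x)=0$. Next, using the anticommutativity $Q_iQ_j=-Q_jQ_i$ together with the definitions of $e_k$ and $y_{m+1}$, one checks directly that $e_k=\pm Q_{k-1}(y_{m+1})$: in the case $(G,p)=(PU(p),p)$ one has $e_2=Q_0Q_1 y_2=-Q_1(Q_0 y_2)=-Q_1(y_{m+1})$, and in the remaining cases $e_3=Q_1Q_2 y_4=-Q_2(Q_1 y_4)=Q_2(y_{m+1})$. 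Finally, applying the derivation $Q_{k-1}$ to the relation $y_{m+1}\cdot x=0$ supplied by (1), with sign $(-1)^{|y_{m+1}|}=-1$, yields
\[
0=Q_{k-1}(y_{m+1}\cdot x)=Q_{k-1}(y_{m+1})\cdot x - y_{m+1}\cdot Q_{k-1}(x)=\pm\, e_k\cdot x,
\]
so $e_k\cdot x=0$.

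The only delicate point is the bookkeeping required to pin down $e_k$ as $\pm Q_{k-1}(y_{m+1})$ and to carry the correct signs through the derivation formula; neither presents a real obstacle, and the entire lemma then reduces cleanly to Theorem~\ref{detect}.
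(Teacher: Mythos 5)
Your proposal is correct and follows essentially the same route as the paper: both reduce to the odd-degree injectivity of $\xi^*$ from Theorem~\ref{detect} to kill the odd-degree classes $y_{m+1}\cdot x$ and $Q_{k-1}x$, and then obtain $e_k\cdot x=0$ by applying the derivation $Q_{k-1}$ to $y_{m+1}\cdot x=0$ using $Q_{k-1}(y_{m+1})=\pm e_k$. The only difference is cosmetic bookkeeping of signs and the order of factors in the product.
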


\begin{proof}
Suppose that $z\in \mathrm{Ker}\, \xi^*$.
Then, $\xi^*(z \cdot y_{m+1})=0$ and $\deg (z \cdot y_{m+1})$ is odd. Hence,  we have $z \cdot y_{m+1}=0$ in $H^{*}(BG)$. 

We also get $Q_{k-1}(z \cdot y_{m+1})=0$. On the other hand,  since $\xi^*(Q_{k-1}z)=0$ and since $\deg (Q_{k-1} z)$ is odd, we have $Q_{k-1}z=0$ in $H^{*}(BG)$. Hence, we get 
\[
Q_{k-1}(z \cdot y_{m+1})=Q_{k-1}z \cdot y_{m+1}+(-1)^{k-1}z\cdot e_k=(-1)^{k-1}z\cdot e_k=0.
\]
So, we obtain $z\cdot e_k=0$. 
Thus, we have the desired results.
\end{proof}

By choosing suitable elements in $H^{*}(BG)$ as generators, we may consider a subalgebra of $H^
{*}(BG)$, which maps to $R$ in $H^{*}(BA)^{W(A)}$. By abuse of notation, we call it $R$.
Then, we may consider 
\[
E_{m+1}=\cdots=E_2=(M_0\oplus M_1 \oplus \mathrm{Ker}\, \xi^* )\otimes H^{*}(G/T),
\]
as an $R\otimes S$-module.
By Propositions~\ref{first-differential} and \ref{image} (2)  and Lemma~\ref{null} (1),  we have 
the $E_{m+2}$-term:
\[
E_{m+2}=(M_1 \otimes N_{p-1}) \oplus (M_1^{even}/(e_k) \otimes N_{\leq p-2}) \oplus (M_0\otimes N_0 )\oplus ( \mathrm{Ker}\,\xi^* \otimes H^{*}(G/T)), 
\]
where $N_{\leq i}$ is the $S$-submodule of $H^{*}(G/T)$ generated by $x_m^k$ ($k\leq i$) and 
$N_i$ is the $S$-submodule generated by a single element $x_m^i$ in $H^{*}(G/T)$. Observe that the above direct sum decomposition is  in the category of $R\otimes S$-modules.

Now, we investigate the action of the Weyl group on the spectral sequence  in terms of $\sigma_j$.
Recall that we define $\{ r_j\}$ to be the generators of the Weyl group $W$ and that $\sigma_j=1-r_j^*$
where  $\sigma_j$ acts on the spectral sequence by $\sigma_j(y \otimes x)=y \otimes \sigma_j (x)$ and so it commutes with the differential $d_r$ for $r\geq 2$.



\begin{lemma}\label{sigma}
There holds $\sigma_j (x_m^{i}) \in N_{\leq i-1}$ for all $\sigma_j$.
\end{lemma}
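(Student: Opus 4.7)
The plan is to establish the base case $i=1$, namely $\sigma_j(x_m)\in S=N_{\leq 0}$, and then bootstrap to $i\geq 2$ using that $r_j^*$ is a ring homomorphism. By Theorem~\ref{flag}, $H^*(G/T)$ is a free $S$-module on $\{x_m^k : 0\leq k\leq p-1\}$, and since $x_m^k$ has degree $mk$, only $k=0,1$ contribute in degree $m$, yielding
\[
H^m(G/T)=S^m\cdot 1\oplus\mathbb{Z}/p\cdot x_m.
\]
Thus $r_j^*(x_m)=s'+c'x_m$ with $s'\in S^m$ and $c'\in\mathbb{Z}/p$, so $\sigma_j(x_m)=(1-c')x_m-s'$, and it suffices to show $c'=1$.

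To pin down $c'=1$, I would feed $1\otimes\sigma_j(x_m)$ into the transgression $d_{m+1}$ from Proposition~\ref{first-differential}. Since $\sigma_j$ commutes with each $d_r$ and acts trivially on the $H^*(BG)$ factor (the $W$-action on $BG$ is homotopically trivial),
\[
d_{m+1}(1\otimes\sigma_j(x_m))=\sigma_j(d_{m+1}(1\otimes x_m))=\sigma_j(\alpha\, y_{m+1}\otimes 1)=0.
\]
On the other hand, elements of $S$ are permanent cycles: $S$ is the image of $\iota^*:H^*(BT)\to H^*(G/T)$, which under the edge homomorphism is identified with $E_\infty^{0,*}\subset E_2^{0,*}$, so $d_{m+1}(1\otimes s')=0$. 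Combining, $0=(1-c')\alpha(y_{m+1}\otimes 1)$, and since $\alpha\neq 0$ we conclude $c'=1$, hence $\sigma_j(x_m)=-s'\in S$.

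For the inductive step, write $r_j^*(x_m)=x_m+s$ with $s\in S$. Since $r_j^*$ is a ring homomorphism,
\[
r_j^*(x_m^i)=(x_m+s)^i=x_m^i+\sum_{k=0}^{i-1}\binom{i}{k}s^{i-k}x_m^k,
\]
so $\sigma_j(x_m^i)=-\sum_{k=0}^{i-1}\binom{i}{k}s^{i-k}x_m^k$, each summand lying in $S\cdot x_m^k\subset N_{\leq i-1}$.

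The delicate step is the base case. The leverage comes from the fact that $d_{m+1}$ cleanly separates the two summands of $H^m(G/T)$: it annihilates $S$ (permanent cycles) and sends $x_m$ to $\alpha(y_{m+1}\otimes 1)\neq 0$, so it detects the scalar $c'$ exactly. This mechanism depends crucially on the triviality of the $W$-action on $H^*(BG)$; without it, $\sigma_j(y_{m+1}\otimes 1)$ need not vanish and the argument would break down.
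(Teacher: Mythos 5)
Your proof is correct and follows essentially the same route as the paper: the coefficient of $x_m$ in $\sigma_j(x_m)$ is killed by comparing $d_{m+1}(1\otimes x_m)=\alpha(y_{m+1}\otimes 1)$ with the vanishing of $\sigma_j$ on the base, and the case of higher powers follows from multiplicativity of $r_j^*$. The only cosmetic difference is that you expand $(x_m+s)^i$ binomially while the paper uses the identity $\sigma_j(xy)=\sigma_j(x)y+x\sigma_j(y)-\sigma_j(x)\sigma_j(y)$ and induction, and you make explicit (via $S=\mathrm{Im}\,\iota^*$ consisting of permanent cycles) a step the paper leaves implicit.
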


\begin{proof}
Since $d_{m+1}$ commutes with $\sigma_j$ and since $\sigma_j(y_{m+1}\otimes 1)=0$, we have 
\[d_{m+1}(\sigma_j(1\otimes x_m))=0.
\]
Suppose that $\sigma_j(x_m)=\beta x_m + s$ for some $\beta \in \mathbb{Z}/p$ and $s$ in $S$.
 Then, we have \[
 d_{m+1}(\beta (1 \otimes x_m)+ 1 \otimes s)=\alpha \beta (y_{m+1}\otimes 1)=0.
 \]
 Therefore, we have $\beta =0$ and $\sigma_j(x_m) \in N_0=S$.
In general, we have
\[
\sigma_j(x y)=\sigma_j(x)y+x\sigma_j(y)-\sigma_j(x)\sigma_j(y).
\]
Hence, we have 
\[
\sigma_j(x_m^i)=\sigma_j(x_m)x_m^{i-1}+x_m\sigma_j(x_m^{i-1})-\sigma_j(x_m)\sigma_j(x_m^{i-1})
\in N_{\leq i-1},\]
as desired.
\end{proof}



\begin{remark}\label{remark}
By Lemma~\ref{sigma}, $\sigma_j$ acts trivially on $N_{i}=N_{\leq i}/N_{\leq i-1}$. Hence, 
it is easy to see that 
\[
(E_{m+2}^{*, *'})^W=(M_1^{odd} \oplus e_k M_1^{even}) \otimes N_{p-1} \oplus (M_1^{even}/(e_k) \oplus M_0 \oplus \mathrm{Ker}\, \xi^*)\otimes \mathbb{Z}/p \not = E_{m+2}^{*, 0}.
\]
\end{remark}

Now, we begin to compute the next nontrivial differential. 



\begin{proposition}\label{trivial-differential}
For $r\geq m+2$ such that $E_r=E_{m+2}$, we have 
\[
d_r(M_0\otimes N_0)=d_r(\mathrm{Ker}\, \xi^* \otimes H^{*}(G/T))=d_r(M_1^{even}/(e_k) \otimes N_{\leq p-2})=\{0\}.
\]
\end{proposition}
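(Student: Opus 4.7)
The plan is to handle the three vanishing statements separately, each time reducing via the Leibniz rule $d_r(y \otimes x) = (-1)^{|y|}(y \otimes 1) \cdot d_r(1 \otimes x)$, which is valid because every $y \in H^{*}(BG) \subset E_r^{*,0}$ is a permanent cycle (the edge $E_r^{*,0}$ carries no outgoing $d_r$).

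For $d_r(M_0 \otimes N_0) = 0$ the argument is immediate. Since $N_0 = S = \mathrm{Im}\,\iota^*$, every $s \in N_0$ represents a class in $E_\infty^{0,*}$ and is a permanent cycle. Combined with $y \in M_0 \subset H^{*}(BG)$ being a permanent cycle, Leibniz gives $d_r(y \otimes s) = 0$ for every $r$.

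The remaining two vanishings share a common structural claim, which I would extract as a lemma: for every fiber class $x$, the $H^{*}(BG)$-coefficients appearing in $d_r(1 \otimes x)$ lie in the ideal $(y_{m+1}, e_k) \subset H^{*}(BG)$. For $r = m+1$ this is Proposition~\ref{first-differential} (the coefficient is a scalar multiple of $y_{m+1}$). For $r \geq m+2$ with $E_r = E_{m+2}$, the plan is to invoke a Kudo--Araki-type transgression on the surviving combinations $x_m^p$ or $x_m^{p-1}\beta x_m$; since $Q_{k-1} y_{m+1} = \pm e_k$ up to elements of $R$, the resulting higher-transgression coefficients land in $(y_{m+1}, e_k)$. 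Granting this, Part~2 follows immediately from Lemma~\ref{null}: $z \in \mathrm{Ker}\,\xi^*$ annihilates both $y_{m+1}$ and $e_k$ and hence the entire ideal they generate. For Part~3, the $y_{m+1}$-contribution has already been exhausted at $r = m+1$ (we are now at levels where $E_r = E_{m+2}$), leaving only $e_k$-divisible coefficients; these kill classes in $M_1^{even}/(e_k)$ by the defining $(e_k)$-relation. A parity and bidegree check (the even-degree source $M_1^{even}/(e_k) \otimes N_{\leq p-2}$ maps to odd-degree target classes, and the bidegree shift $(r,-r+1)$ with $r \geq m+2$ restricts where the fiber factor $N_{\leq p-2}$ can land) disposes of any residual contribution.

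The main obstacle is the ideal-containment claim for $r \geq m+2$: one must identify the next non-trivial differential precisely and verify via Kudo--Araki transgression that its image coefficients factor through $e_k = Q_{k-1} y_{m+1}$. This is the computationally delicate step and relies on the explicit identifications $e_2 = Q_0 Q_1 y_2$ for $(G,p) = (PU(p),p)$ and $e_3 = Q_1 Q_2 y_4$ in the remaining cases, together with the naturality of transgression under the Milnor and Steenrod operations.
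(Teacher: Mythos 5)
Your first part (the vanishing on $M_0\otimes N_0$) matches the paper: that summand is generated over $R\otimes S$ by base classes, which are permanent cycles, so all higher differentials vanish on it. The problem is with the other two summands. Your argument for them rests entirely on the ``common structural claim'' that the $H^{*}(BG)$-coefficients of $d_r(1\otimes x)$ lie in the ideal $(y_{m+1},e_k)$ for every fiber class $x$ and every $r\geq m+2$ with $E_r=E_{m+2}$ --- and you do not prove this claim. Kudo--Araki transgression cannot deliver it: it only governs the transgressive differential $d_{n+1}$ on the specific classes $y_{m+1}\otimes x_m^{p-1}$ (respectively $x_m^p$), and says nothing about a hypothetical $d_r$ with $m+2\leq r\leq n$, nor about differentials on general elements $z\otimes x_m^{i}$. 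Worse, identifying the next nontrivial differential is exactly what Propositions 5.5 and 5.6 of the paper accomplish \emph{after} --- and using --- the present proposition; so your plan inverts the logical order and, if carried out along the paper's lines, would be circular. Even granting the claim, your treatment of $M_1^{even}/(e_k)\otimes N_{\leq p-2}$ is only a gesture: ``the $y_{m+1}$-contribution has been exhausted'' and ``the defining $(e_k)$-relation kills the rest'' need to be turned into a statement about where the products actually land in $E_{m+2}$ and why they vanish there.

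The paper avoids all of this with a short module-theoretic argument that you should adopt. Both remaining summands are concentrated in even total degree ($\mathrm{Ker}\,\xi^{*}$ is even by Theorem~\ref{detect}), so any $d_r$ on them lands in $E_{m+2}^{odd}=M_1^{odd}\otimes N_{p-1}\oplus M_0^{odd}\otimes N_0$. Multiplication by the permanent cycle $e_k\otimes 1$ annihilates the sources (by definition of the quotient $M_1^{even}/(e_k)$, and by Lemma~\ref{null} for $\mathrm{Ker}\,\xi^{*}$) but is a monomorphism on the odd-degree target, since $M_0$ and $M_1$ are free $R$-modules. As $d_r$ commutes with multiplication by $e_k\otimes 1$, we get $e_k\cdot d_r(w)=d_r(e_k\cdot w)=0$ and hence $d_r(w)=0$. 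This requires no knowledge of what the differential actually is, which is precisely the point of the proposition.
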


\begin{proof}
Since $M_0\otimes N_0$ is generated by $M_0\otimes \mathbb{Z}/p$ as an $R\otimes S$-module, $d_r(M_0\otimes N_0)=\{0\}$ holds for $r\geq m+2$.
For $M_1^{even}/(e_k) \otimes N_{\leq p-2}$, there exist no odd degree generators. Hence, we have 
$d_r(M_1^{even}/(e_k) \otimes N_{\leq p-2})\subset E_{m+2}^{odd}=M_1^{odd}\otimes N_{p-1} \oplus M_0^{odd} \otimes N_0$.
On the one hand, the multiplication by $e_k\otimes 1$ is zero on $M_1^{even}/(e_k) \otimes N_{\leq p-2}$.
On the other hand, the multiplication by $e_k\otimes 1$ is a monomorphism on $M_1^{odd}\otimes N_{p-1} \oplus M_0^{odd} \otimes N_0$. Hence, we have 
\[
d_r(M_1^{even}/(e_k) \otimes N_{\leq p-2})=\{0\}.
\]
Finally, by Lemma~\ref{null}, the same holds for $\mathrm{Ker}\, \xi^{*} \otimes H^{*}(G/T)$ and so we obtain 
\[
d_r(\mathrm{Ker}\, \xi^{*} \otimes H^{*}(G/T))=\{0\}. \qedhere
\]
\end{proof}

Let $n=m(p-1)$ for the sake of notational simplicity. Next, we show the following proposition.



\begin{proposition}
If $r\geq m+2$ and if $d_r$ is nontrivial, then $r\geq n+1$.
\end{proposition}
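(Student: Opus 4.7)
My plan is to combine Proposition~\ref{trivial-differential}, which already confines any potentially nonzero $d_r$ to the summand $M_1\otimes N_{p-1}$, with Weyl invariance: since $d_r$ commutes with every $\sigma_j$, one can force $d_r(M_1\otimes N_{p-1})$ to land in $(E_r^{*,*'})^W$, and Remark~\ref{remark} will show that these invariants vanish in the relevant fiber-degree range. The spectral sequence is multiplicative, $d_r$ vanishes on $R\otimes 1$ (base row) and on $1\otimes S$ (permanent cycles lifted from $H^{*}(BT)$), so Leibniz reduces the task to checking $d_r(y\otimes x_m^{p-1})=0$ for $y$ running over a set of algebra generators of $M_1$.

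The core step is to show that $d_r(y\otimes x_m^{p-1})$ is automatically Weyl-invariant. For any generator $r_j$ of $W$, Lemma~\ref{sigma} gives $\sigma_j(x_m^{p-1})\in N_{\leq p-2}$, hence
\[
\sigma_j(y\otimes x_m^{p-1})=y\otimes\sigma_j(x_m^{p-1})\in M_1\otimes N_{\leq p-2}.
\]
Because $H^{*}(G/T)$ is free on the basis $\{x_m^j:0\leq j\leq p-1\}$ over $S$ and the $x_m^{p-1}$-coefficient of $\sigma_j(x_m^{p-1})$ is zero, on $E_{m+2}$ the class of $y\otimes\sigma_j(x_m^{p-1})$ has no component in $M_1\otimes N_{p-1}$ and therefore lies entirely in the summand $M_1^{even}/(e_k)\otimes N_{\leq p-2}$. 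By Proposition~\ref{trivial-differential} this summand is annihilated by $d_r$, so $d_r(\sigma_j(y\otimes x_m^{p-1}))=0$. Commuting $d_r$ past $\sigma_j$, we conclude $\sigma_j(d_r(y\otimes x_m^{p-1}))=0$ for every $j$, i.e.\ $d_r(y\otimes x_m^{p-1})\in (E_r^{*,*'})^W$.

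It remains to note that $d_r(y\otimes x_m^{p-1})$ lives in fiber degree $n-r+1$, which lies in $[1,n-m-1]\subset[1,n-1]$ for $r\in[m+2,n]$. Remark~\ref{remark} exhibits $(E_{m+2}^{*,*'})^W$ as $(M_1^{odd}\oplus e_kM_1^{even})\otimes N_{p-1}\oplus(M_1^{even}/(e_k)\oplus M_0\oplus\mathrm{Ker}\,\xi^*)\otimes\mathbb{Z}/p$, the first summand concentrated in fiber degrees $\geq n$ and the second in fiber degree $0$; hence $(E_{m+2}^{*,t})^W=0$ for $1\leq t\leq n-1$. This forces $d_r(y\otimes x_m^{p-1})=0$, and a short induction on $r$, using Proposition~\ref{trivial-differential} to propagate $E_r=E_{m+2}$ at each stage, yields $E_{m+2}=E_{m+3}=\cdots=E_{n+1}$.

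The main obstacle is the identification that the class of $\sigma_j(y\otimes x_m^{p-1})$ on $E_{m+2}$ lies purely in the ``safe'' summand $M_1^{even}/(e_k)\otimes N_{\leq p-2}$, with no residual contribution in $M_1\otimes N_{p-1}$. This rests on the $R\otimes S$-module direct-sum structure of the $E_{m+2}$-decomposition together with Lemma~\ref{sigma}, which ensures $\sigma_j(x_m^{p-1})$ has no $x_m^{p-1}$-component; once this is spelled out, the argument is a compact assembly of Proposition~\ref{trivial-differential}, Lemma~\ref{sigma}, and Remark~\ref{remark}.
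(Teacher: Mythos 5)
Your argument is correct and essentially reproduces the paper's proof: in both, the heart is that $\sigma_j$ commutes with $d_r$, Lemma~\ref{sigma} pushes $\sigma_j(x_m^{p-1})$ into $N_{\leq p-2}$, whose class in $E_{m+2}$ lies in the summand $M_1^{even}/(e_k)\otimes N_{\leq p-2}$ annihilated by Proposition~\ref{trivial-differential}, so $\sigma_j d_r(y\otimes x_m^{p-1})=0$ for every $j$. The only (cosmetic) difference is the final step: the paper gets the contradiction directly from Kac's theorem $H^{*}(G/T)^W=\mathbb{Z}/p$ applied to the fiber components of a hypothetical nonzero differential, whereas you phrase it as $W$-invariance of $d_r(y\otimes x_m^{p-1})$ together with Remark~\ref{remark}, which encodes the same vanishing of invariants in fiber degrees $1,\dots,n-1$.
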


\begin{proof}
Suppose that we have a nontrivial differential $d_r$ for some $r<n+1$, say,
\[
d_r(z \otimes x_m^{p-1})=z_{i_1} \otimes x'_1+\cdots +z_{i_\ell} \otimes x'_\ell,
\]
where $z \in M_1$, $1\leq i_1<\cdots < i_{\ell} \leq L$, $\{ z_1, \dots, z_L\}$ is a basis for 
\[
(M_1^{even}/(e_k) \oplus M_0 \oplus \mathrm{Ker}\, \xi^*)^{\deg z+r},
\]
and $x'_1, \dots, x'_\ell \in H^{n-r+1}(G/T)$, $x'_1, \dots, x'_\ell \not=0$.
Since $H^{*}(G/T)^W=\mathbb{Z}/p$, for $x'_1 \not =0$ in $H^{n-r+1}(G/T)$, there exists $\sigma_j$ such that
$\sigma_j(x'_1) \not=0$.  Therefore, we have 
\[
\sigma_j d_r(z \otimes x_{m}^{p-1})\not=0.
\]
On the other hand, by Lemma~\ref{sigma}, we have 
$\sigma_j(x_{m}^{p-1}) \in N_{\leq p-2}$. Hence, by Proposition~\ref{trivial-differential} above, we have
\[
\sigma_j d_r(z \otimes x_{m}^{p-1})\in d_r(M_1^{even}/(e_k) \otimes N_{\leq p-2})=\{0\}.
\]
This is a contradiction. Hence, we have $r\geq n+1$.
\end{proof}

Finally, we complete the computation of the spectral sequence.



\begin{proposition}\label{differential}
There holds $d_{n+1}(M_1\otimes N_{p-1})=(M_0^{odd}\oplus e_kM_0^{even}) \otimes N_0$.
\end{proposition}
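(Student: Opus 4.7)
The plan is to show that $d_{n+1}$, restricted to the summand $M_1 \otimes N_{p-1}$, acts up to a fixed nonzero scalar $\lambda \in (\mathbb{Z}/p)^{\times}$ by
\[
z \otimes s\, x_m^{p-1} \;\longmapsto\; \lambda\, s \cdot Q_{k-1}(z) \otimes 1,
\]
and then to read off the image from Proposition~\ref{image}(3), which identifies $Q_{k-1}(M_1)$ with $M_0^{odd} \oplus e_k M_0^{even}$.

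First, I would observe that $d_{n+1}$ on $M_1 \otimes N_{p-1}$ is $R \otimes S$-linear. The subalgebra $R \subset H^{*}(BG)$ acts through the base and commutes with every differential; the ring $S = \mathrm{Im}\,\iota^{*}$ consists of reductions of permanent cycles from $H^{*}(BT)$, so $S$-multiplication also commutes with $d_{n+1}$. Hence $d_{n+1}|_{M_1 \otimes N_{p-1}}$ is determined by its values on $R$-generators $\eta_i \otimes x_m^{p-1}$.

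Second, I would fix the scalar $\lambda$ by a Kudo-type transgression argument. The class $y_{m+1} \otimes x_m^{p-1}$ has odd total degree $mp+1$ and so must eventually die, since $H^{*}(BT)$ is polynomial and hence $E_{\infty}^{odd} = 0$; by the preceding proposition, the first differential that can kill it is $d_{n+1}$. A Kudo-style computation, building on the transgression $d_{m+1}(1\otimes x_m) = \alpha\, y_{m+1} \otimes 1$ from Proposition~\ref{first-differential} and the commutation of Milnor operations with transgressions, produces
\[
d_{n+1}(y_{m+1} \otimes x_m^{p-1}) = \lambda\, e_k \otimes 1
\]
for some $\lambda \neq 0$, via the identity $Q_{k-1}(y_{m+1}) = -e_k$ coming from the definition $e_k = Q_0 Q_1 \cdots Q_{k-1}(\cdot)$ together with the anticommutativity of the $Q_j$'s. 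This establishes the required formula on the cyclic $R$-submodule $y_{m+1} \cdot M_0 \subset M_1$ identified in Proposition~\ref{image}(2).

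Third, I would promote the formula to all of $M_1 \otimes N_{p-1}$. For each remaining $R$-generator $z' \in M_1$ representing a class in $M_1^{even}/(e_k)$, the class $z' \otimes x_m^{p-1}$ again has odd total degree and must die via $d_{n+1}$; Lemma~\ref{null} (which forces multiplication by $e_k$ to be trivial on $\mathrm{Ker}\,\xi^{*}$) and the Weyl invariance constraints of Lemma~\ref{sigma} and Remark~\ref{remark} localize the target into $(M_0^{odd} \oplus e_k M_0^{even}) \otimes 1$. A Poincaré-series count using Proposition~\ref{image}(3), which asserts $Q_{k-1}: M_1 \xrightarrow{\sim} M_0^{odd} \oplus e_k M_0^{even}$, confirms that $M_1 \otimes N_{p-1}$ and $(M_0^{odd} \oplus e_k M_0^{even}) \otimes N_0$ have matching graded dimensions (shifted by the differential's degree one); combined with the nontriviality of $d_{n+1}$ and its $R \otimes S$-linearity, this forces the stated equality
\[
d_{n+1}(M_1 \otimes N_{p-1}) = (M_0^{odd} \oplus e_k M_0^{even}) \otimes N_0.
\]
The main obstacle is the third step: extending the single-class Kudo computation to all $R$-generators of $M_1$ uniformly. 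A direct transgression argument handles the cyclic submodule $y_{m+1}\cdot M_0$; the remaining generators are handled most efficiently via the above rank-counting combined with Weyl and Steenrod constraints, rather than via case-by-case higher transgression computations.
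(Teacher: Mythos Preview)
There is a genuine gap in your third step. The parity claim is wrong: for a generator $z'\in M_1^{even}$ (e.g.\ $y_4$ or $y_{2p+2}$ in the $(F_4,3)$ case) the class $z'\otimes x_m^{p-1}$ has \emph{even} total degree, since both $\deg z'$ and $m(p-1)$ are even; so ``odd total degree, must die'' does not apply to these generators. Once the parity argument is removed, what remains of your third step is ``$d_{n+1}$ is nontrivial and $R\otimes S$-linear, and source and target have matching Poincar\'e series'', and that does not force the map to be onto. Your Kudo computation pins down $d_{n+1}$ only on the cyclic $R$-submodule $y_{m+1}M_0\subset M_1$; the Weyl and Steenrod constraints you invoke do not by themselves determine the differential on the remaining even-degree $R$-generators of $M_1$.

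The paper closes this gap without ever identifying $d_{n+1}$ with $Q_{k-1}$. It first observes that every class in $(M_0^{odd}\oplus e_k M_0^{even})\otimes\mathbb{Z}/p$ is a permanent cycle (bottom row) which cannot survive to $E_\infty$: the odd-degree part because $H^*(BT)$ is concentrated in even degrees, and the $e_k$-multiples because all Milnor operations vanish on $H^*(BT)$, whence $\eta^*(e_k)=0$. So each such class must eventually be a boundary. One then runs a minimal-counterexample argument on total degree: if some class of degree $s$ is not hit by $d_{n+1}$, then in degrees below $s$ every such class \emph{is} hit by $d_{n+1}$, and the dimension count from Proposition~\ref{image}(3) (which is exactly your rank match, $\deg Q_{k-1}=n+1$) makes $d_{n+1}\colon M_1\otimes N_{p-1}\to (M_0^{odd}\oplus e_k M_0^{even})\otimes N_0$ an isomorphism there. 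Consequently $E_{n+2}$ in degrees $\le s$ is supported in fibre degrees $<n$, so no $d_r$ with $r\ge n+2$ can reach the escaping class, and it survives to $E_\infty$ --- a contradiction. This uses precisely the rank equality you identified, but replaces the missing extension of the Kudo formula by a uniform non-survival argument that handles the even-degree generators as well.
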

\begin{proof}
The $E_{n+1}$-term is equal to 
\[
M_1\otimes N_{p-1} \oplus 
M_1^{even}/(e_k)\otimes N_{\leq p-2} \oplus M_0\otimes N_0 \oplus (\mathrm{Ker}\,\xi^*) \otimes H^{*}(G/T)
\]
and 
\[
d_{n+1}(M_1^{even}/(e_k)\otimes N_{\leq p-2} \oplus M_0\otimes N_0 \oplus (\mathrm{Ker}\,\xi^*) \otimes H^{*}(G/T))=\{0\}.
\]
Since $M_1^{even}/(e_k)\otimes N_{\leq p-2} \oplus M_0\otimes N_0 \oplus (\mathrm{Ker}\,\xi^*) \otimes H^{*}(G/T)$ is generated by the elements of the second degree less than $n$, that is, the elements in $E_{r}^{*,*'}$ ($*'<n$),  it is clear that 
\[
d_r(M_1^{even}/(e_k)\otimes N_{\leq p-2} \oplus M_0\otimes N_0 \oplus (\mathrm{Ker}\,\xi^*) \otimes H^{*}(G/T))=\{0\}
\]
for all $r\geq n+1$.

On the other hand, since all the elements in $(M_0^{odd}\oplus e_kM_0^{even})\otimes \mathbb{Z}/p$ do not survive to the $E_\infty$-term and since $d_r(M_0\otimes N_0)=\{0\}$ for all $r\geq 2$, 
all the elements in $(M_0^{odd}\oplus e_kM_0^{even})\otimes \mathbb{Z}/p$ must be hit by nontrivial differentials.

Suppose that there exists an element in $(M_0^{odd}\oplus e_kM_0^{even})\otimes \mathbb{Z}/p$ which is not hit by $d_{n+1}$.
Let $z \otimes 1$ be such an element with the lowest degree $s$.
Up to degree $<s$, by Proposition~\ref{image}, we see that 
\[
d_{n+1}:M_{1}^{i} \otimes N_{p-1} \to (M_0^{odd} \oplus e_kM_0^{even})^{i+n+1} \otimes N_0
\]
is an isomorphism for $i<s$.

Then,  $\mathrm{Ker}\, d_{n+1}$
is equal to $M_1^{even}/(e_k)\otimes N_{\leq p-2} \oplus M_0\otimes N_0 \oplus (\mathrm{Ker}\,\xi^*) \otimes H^{*}(G/T)$ up to degree $s$.
Therefore, for $r\geq n+2$, $\mathrm{Im}\, d_{r} =\{0\}$ up to degree $\leq s$. Hence the element $z \otimes 1$ survives to the $E_{\infty}$-term. This is a contradiction. 
So, the proposition holds.
\end{proof}

Thus, by Propositions \ref{trivial-differential} and \ref{differential}, we have 
\[
E_{n+2}=(M_1^{even}/(e_k) \otimes N_{\leq p-2})  \oplus (M_0^{even}/(e_k) \otimes N_0) \oplus
( \mathrm{Ker}\, \xi^* \otimes H^{*}(G/T)).
\]
Since there are no odd degree elements in the $E_{n+2}$-term, the spectral sequence collapses at the $E_{n+2}$-level and we obtain $E_\infty=E_{n+2}$ 
and \[
(E_{\infty}^{*,*'})^W=E_{\infty}^{*,0}=(M_1^{even}/(e_k)  \oplus M_0^{even}/(e_k) \oplus \mathrm{Ker}\, \xi^* )\otimes \mathbb{Z}/p.
\]
This completes the proof of Theorem~\ref{main}.



\end{document}